\documentclass[11pt,reqno]{amsart}
\usepackage{amscd,amsmath,amsopn,amssymb,amsthm,multicol}
\usepackage{tikz,subdepth,anysize,verbatim,ifthen,xargs,colortbl,float}
\usepackage{longtable,mathtools,hyperref,chemarrow}
\usepackage[english]{babel}\usetikzlibrary{positioning}
\everymath=\expandafter{\the\everymath\displaystyle}

\theoremstyle{plain}
\newtheorem{theorem}{Theorem}
\newtheorem{prop}{Proposition}
\newtheorem{lemma}{Lemma}
\newtheorem{cor}{Corollary}
\newtheorem{rk}{Remark}
\theoremstyle{definition}

\newcommand\com[1]{}
\newcommand\C{{\mathbb C}}

\newcommand\g{{\mathfrak g}}

\newcommand\m{{\frak m}}

\newcommand\op[1]{\mathop{\rm #1}\nolimits}
\newcommand\p{\partial}

\newcommand\R{{\mathbb R}}
\newcommand\Z{{\mathbb Z}}

\begin{document}

\title{On CR manifolds of CR dimension 1}

\author{Boris Kruglikov}
\address{Department of Mathematics and Statistics, UiT the Arctic University of Norway, Troms\o\ 9037, Norway.
\ E-mail: {\tt boris.kruglikov@uit.no}. }

 \begin{abstract}
We classify all maximal symmetry models of CR dimension 1, depending on their Bloom-Graham and Tanaka types, 
give coordinate realization to some of those models and prove a general extension principle.
 \end{abstract}

\maketitle

\section{Formulation of the Problem and the Results}\label{S1}

Consider a real hypersurface $M^m\subset\C^n$. Its induced CR structure is the distribution $D=TM\cap JTM$ with
the operator of complex multiplication $J:T\C^n\to T\C^n$ restricted to $D$. 
The CR dimension is the rank $\dim_\C D=\tfrac12\dim_\R D$,
and we will focus on the simplest possible case of CR dimension 1. The CR codimension $\dim_\R M-\dim_\R D$
will be arbitrary in our work. We will assume throughout that $D$ bracket-generates $TM$ 
but the length (which is not the same as CR codimension) will be un-restricted. 
The bracket-generating (aka CR minimal) property implies that $(M,D,J)$ is Levi-nondegenerate 
(for CR-$\dim=1$), so its symmetry algebra (both infinitesimal and local) is finite-dimensional. 

A classical problem, going back to H.\ Poincar\'e \cite{Po}, is to find an effective bound on the dimension 
of local or global automorphism group of CR manifolds. This problem is by now well-understood 
for Levi-nondegenerate hypersurfaces, but is still wide open 
for Levi-degenerate higher-nondegenerate hypersurfaces and for Levi-nondegenerate CR structures of higher CR codimension.
Our first result is a sharp bound on this dimension for the class under consideration.
This was proven in a particular case of totally nondegenerate models (specific assumption on the growth vector)
in \cite{Su} (prolongation rigidity\footnote{The term ``rigid'' 
in CR context refers to a specific form of polynomial models; for vector distributions it means finite-dimensionality of their 
symmetry, while prolongation rigidity strengthens this to vanishing of the positive Tanaka prolongation.}
for more general totally nondegenerate CR structures was proven in \cite{SS,Gr}); we require no assumptions of that kind.

 \begin{theorem}\label{Th1}
The symmetry dimension of the bracket-generating CR manifolds $(M,D,J)$ of CR dimension 1 does not exceed $m+2$, 
where $m=\dim M$, provided $m>3$. In fact, for every CR symbol, the most symmetric structures of this kind 
have symmetry dimension $m+r$, where $1\leq r\leq 2$.
 \end{theorem}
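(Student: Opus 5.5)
The plan is to use Tanaka theory. Let $D$ be bracket-generating of rank 2 with complex structure $J$. At a point, the symbol is the graded nilpotent Lie algebra $\m=\g_{-\nu}\oplus\dots\oplus\g_{-1}$, where $\g_{-1}=D_x\cong\C$ and $\m$ is generated by $\g_{-1}$. The CR symbol is the pair $(\m,J)$, with $J$ acting on $\g_{-1}$. By Tanaka's theorem the symmetry dimension is bounded by $\dim\g$, where $\g=\m\oplus\g_0\oplus\g_1\oplus\dots$ is the Tanaka prolongation of $(\m,J)$. Here $\g_0$ consists of the degree-zero derivations of $\m$ whose restriction to $\g_{-1}$ commutes with $J$. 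Since $\dim\m=m$, it suffices to prove two things:
\begin{enumerate}
\item $\dim\g_0\le 2$;
\item $\g_{k}=0$ for $k>0$ whenever $m>3$ (prolongation rigidity).
\end{enumerate}
Sharpness then comes from the flat model: the nilpotent group of $\m$ with the left-invariant CR structure carries the symmetry algebra $\m\rtimes\g_0$. So the maximal dimension for a given symbol is $m+\dim\g_0$, and it remains to check $\dim\g_0\ge1$.

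\textbf{Degree zero.} A degree-zero derivation is determined by its restriction to $\g_{-1}$, because $\g_{-1}$ generates $\m$. Commuting with $J$ forces this restriction to lie in $\mathfrak{gl}(1,\C)=\C$, so $\dim\g_0\le2$. For the lower bound, the grading element (multiplication by $k$ on $\g_{-k}$) is always a derivation and restricts to the identity on $\g_{-1}$. Hence $1\le\dim\g_0\le2$, which gives $r\in\{1,2\}$. One can also describe when $r=2$: this happens exactly when the rotations $e^{i\theta}$ of $\g_{-1}$ extend to automorphisms of $\m$. That is needed only for the classification, not for the theorem.

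\textbf{Positive degrees.} This is the main obstacle. Let $\phi\in\g_1$. It is determined by its restriction $\g_{-1}\to\g_0\subset\C$, which is a real-linear map. The constraint comes from the derivation identity applied to $\g_{-2}$: $\phi$ maps $\g_{-2}$ into $\g_{-1}$, and $\phi([X,Y])=[\phi X,Y]+[X,\phi Y]$. Since $\dim\g_{-1}=2$ we have $\dim\g_{-2}\le1$; if it were $0$ then $m=2$, so $\g_{-2}=\langle Z\rangle$ with $Z=[X,JX]$.

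First, if $\nu=2$ then $m=3$, which is excluded, so $\nu\ge3$ and $\g_{-3}\ne0$. Write $\phi(X)=aI+bJ$, where $I$ is the grading element. I will compute $\phi(Z)\in\g_{-1}$ from the Leibniz rule. Then I will impose compatibility with $\phi$ on $\g_{-3}=\langle[X,Z],[JX,Z]\rangle$: the image lands in $\g_{-2}$ and must satisfy the Jacobi-consistent relations. Every nonzero bracket $[\g_{-1},\g_{-3}]$ in $\g_{-4}$ gives a further condition.

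The expectation is that the existence of a nonzero $\g_{-3}$ already forces $a=b=0$. The reason is that $I$ acts on $\g_{-3}$ with weight $3$ while the $J$-type derivation acts by a rotation, and the Leibniz rule gives an equation like $3a\,[X,Z]=a\,[X,Z]+\dots$, which has only the trivial solution. I will verify this by a direct computation in the basis $X, JX, Z, [X,Z], [JX,Z]$, separating the case where $\g_{-3}$ has dimension 1 from the case where it has dimension 2. Once $\g_1=0$, the standard Tanaka argument gives $\g_k=0$ for all $k\ge1$. This yields the bound $\dim\le m+2$ and its sharpness as stated.
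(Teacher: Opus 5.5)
\textbf{The gap is the key step.} Your reduction matches the paper: $\dim\mathfrak{s}\le\dim\g$, then $1\le\dim\g_0\le 2$ (from the grading element and commutation with $J$ on $\g_{-1}$), then sharpness on the flat model $\exp\m$ with symmetry $\m\rtimes\g_0$. The missing piece is the step you yourself flag as the main obstacle, $\g_1=0$ for $\nu\ge3$. It is stated only as an expectation, and the computation you propose cannot establish it.

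First, the constraints on $\phi\in\g_1$ come only from linear relations among brackets in $\m$, and vanishing brackets count as relations. A nonzero bracket spanning the next layer merely \emph{defines} $\phi$ there. There is also no eigenvalue comparison of the form $3a[X,Z]=a[X,Z]$, because $\phi$ maps $\g_{-3}$ into $\g_{-2}$.

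Second, and decisively, the relations that kill $\phi$ may sit only at the very top of $\m$. Take the Goursat symbol \eqref{Gou} of depth $n\ge4$, where $\g_0=\R Z$. Put $[\phi,e_1']=Z$, $[\phi,e_1'']=0$, and $[\phi,e_k]=-\binom{k}{2}e_{k-1}$ for $2\le k\le n$ (with $e_1:=e_1''$). Then the Leibniz identity holds for every pair of basis elements except $(e_1',e_n)$. There the relation $[e_1',e_n]=0$ forces $\binom{n+1}{2}e_n=0$.

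So nothing in $\g_{-1}\oplus\dots\oplus\g_{-4}$ kills $\phi$. The vanishing of $\g_1$ is a global property tied to the finite depth of $\m$, i.e.\ the centrality of $\g_{-\nu}$. A case-by-case computation is not viable either, since rank-two symbols of arbitrary depth are not classified. Two smaller points: your ansatz $\phi(X)=aI+bJ$ omits $\phi(JX)$, and $J$ belongs to $\g_0$ only when $r=2$.

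\textbf{What is needed.} You need a structural argument valid for all $\m$ at once; this is what the paper supplies in Proposition 1.
\begin{itemize}
\item $(\m,\g_0)$ is of finite type.
\item By Yamaguchi's results, $\g_1$ consists of duals of $\g_0$-irreducible pieces of $\g_{-1}$. The whole positive part lies in the semisimple Levi factor $\g^{ss}$, whose rank is at most $r\le2$.
\item Rank $2$ forces a Borel grading of a rank-two simple algebra with a $\g_0$-invariant complex structure on $\g_{-1}$. The only such algebra is $\mathfrak{su}(1,2)$, hence $\nu=2$.
\item Rank $1$ gives $\g^{ss}=\mathfrak{sl}(2,\R)$. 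The radical contains $\g_{-2}\ne0$ and would be an $\mathfrak{sl}(2,\R)$-module on which $Z$ has strictly negative, hence non-symmetric, spectrum, which is impossible.
\end{itemize}

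For $r=1$ you can make this elementary, which also shows exactly where finite depth enters:
\begin{itemize}
\item If $\phi\ne0$, normalize $[\phi,e_1']=Z$ and $[\phi,e_1'']=0$. Then $\langle e_1',Z,\phi\rangle\simeq\mathfrak{sl}(2,\R)$ acts on the finite-dimensional $\g$.
\item Let $N$ be the top degree with $\g_N\ne0$, and take $0\ne w\in\g_N$. It is a highest weight vector of weight $2N$ for $2Z$, so $\op{ad}(e_1')^{2N}w\ne0$.
\item But $\op{ad}(e_1')^N w\in\R Z$, so $\op{ad}(e_1')^{N+2}w=0$. This forces $N+2>2N$, i.e.\ $N=1$.
\item With $N=1$ all weights of $2Z$ are at most $2$, so by symmetry at least $-2$. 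This contradicts $\g_{-\nu}\ne0$ with $\nu\ge2$.
\end{itemize}

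Finally, Tanaka's bound as you invoke it assumes strong regularity. The theorem is stated for arbitrary bracket-generating structures, and the paper uses the extension \eqref{dims} of the bound to the non-regular case.
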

 
The explicit value of $r$ depends on the underlying vector distribution $D$ and the choice of $J$ with respect to it;
we will specify in Section \ref{S6} when $r=2$ is achieved. The Tanaka prolongation procedure, used to prove Theorem \ref{Th1}
in Section \ref{S3}, leads to construction of generalized frame bundles and hence to the following statement.
Recall that strongly regular distributions, in Tanaka terminology, have the same symbol everywhere,
and in the context of CR structures, they have the same the CR symbol at all points.

 \begin{cor}\label{Cor1}
For the class of strongly regular CR manifolds of CR dimension 1, there exists a Cartan connection on the frame bundle over it. 
This gives an equivariant solution to the equivalence problem. Moreover, there is a canonical linear connection on the
base manifold $M$.
 \end{cor}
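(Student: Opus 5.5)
The plan is to run Tanaka's construction for filtered structures whose symbol carries extra structure (a Tanaka structure of type $(\m,G_0)$), applied to the CR symbol. Strong regularity means that at every point $x\in M$ the graded symbol
\[
\m(x)=\g_{-\nu}\oplus\dots\oplus\g_{-1},\qquad \g_{-1}=D_x,
\]
together with the complex structure $J_x$ on $\g_{-1}$, is isomorphic to one fixed graded nilpotent Lie algebra $\m$ with a fixed complex structure $J$ on $\m_{-1}$.

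First, let $\g_0\subset\mathfrak{der}_0(\m)$ consist of the degree-zero derivations whose restriction to $\m_{-1}\simeq\C$ commutes with $J$. Then $\g_0\subset\mathfrak{gl}(1,\C)=\C$, so $\dim\g_0\le 2$. Let $G_0$ be the corresponding group. Define the Tanaka prolongation $\g=\m\oplus\g_0\oplus\g_1\oplus\cdots$ of $(\m,\g_0)$. By the computation in Section \ref{S3} used to prove Theorem \ref{Th1}, $\g_k=0$ for $k>0$, since the bound $m+r$ with $r\le 2$ is exactly $\dim\m+\dim\g_0$. So the prolongation is finite-dimensional and stops at degree $0$, and $\g=\m\oplus\g_0$.

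Second, construct the bundle. Let $P^0\to M$ be the principal $G_0$-bundle of graded frames $\m\to\m(x)$ that are Lie algebra isomorphisms and $J$-linear on degree $-1$; this is well defined by strong regularity. Following Tanaka's theorem (or its reformulation by Morimoto and \v{C}ap--Slov\'ak for filtered manifolds with structure group), build the tower of bundles $P^{k}\to P^{k-1}$ by successively normalizing the components of the soldering form. Each step has structure group the vector group $\g_k$. Since $\g_k=0$ for $k\ge1$, the tower collapses to $P=P^0$ (possibly after replacing it by the reduced bundle $P^{\nu}$ of adapted filtered frames). On $P$ we get a $\g$-valued absolute parallelism $\omega$ that is $G_0$-equivariant and reproduces fundamental vector fields. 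This is precisely a Cartan connection of type $(G,G_0)$, with $G$ the Lie group of $\g$. Uniqueness comes from a normalization condition: choose a $G_0$-invariant complement $N$ to $\partial\bigl(\op{Hom}(\m,\g)\bigr)$ in the Spencer cochains and require the curvature to take values in $N$. Such a complement exists because $G_0$ is reductive ($\C^*$ or a subgroup of it), hence its action is completely reducible. The equivalence problem is then solved equivariantly: two structures are locally equivalent iff their normal Cartan connections are, and symmetries lift to automorphisms of $(P,\omega)$.

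Third, get the linear connection. Since $\g=\m\oplus\g_0$ with $\g_0$ reductive and $[\g_0,\m]\subset\m$, the pair is reductive. Splitting $\omega=\omega_\m+\omega_{\g_0}$, the form $\omega_\m$ is a $G_0$-equivariant soldering form identifying $TM\simeq P\times_{G_0}\m$, and $\omega_{\g_0}$ is a principal connection on $P\to M$. This principal connection induces a linear connection on the associated bundle $TM$, canonically and preserving $D$ and $J$.

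The main obstacle is the second step in the non-rigid boundary cases, namely verifying that the tower really closes. The collapse needs the full graded prolongation to vanish in positive degrees for every symbol, including the cases where $\dim\g_0=2$. If some symbol had $\g_1\ne0$, I would fall back on Tanaka's general theorem, which still yields a Cartan connection provided $\g$ is finite-dimensional. That finite-dimensionality is guaranteed by Levi-nondegeneracy. In that case the canonical linear connection would instead be obtained by a $G_0$-equivariant reduction of $P$ to $P^0$ using the normalization.
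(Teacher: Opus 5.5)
\textbf{The gap: the case $\nu=2$.} You assert that $\g_k=0$ for all $k>0$ and every symbol, citing Section~\ref{S3} and the bound of Theorem~\ref{Th1}. Both Proposition~\ref{P1} and Theorem~\ref{Th1} require $\nu>2$ (equivalently $m>3$). The corollary, however, covers all strongly regular CR manifolds of CR dimension~1, including $\nu=2$.

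In that case $\m=\mathfrak{heis}(3)$ and $\g_0=\mathfrak{gl}(1,\C)$ is forced. Then $\op{pr}(\m,\g_0)=\mathfrak{su}(1,2)$ has $\g_1,\g_2\neq0$, so your tower does not collapse. The real exception is therefore not a large-depth case with $\dim\g_0=2$, which Proposition~\ref{P1} already covers. It is the $3$-dimensional hypersurface case, which the paper handles separately via Cartan's classical construction on a bundle modelled on $SU(1,2)/B$.

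Your fallback does not rescue the third claim there. Tanaka's general theorem gives an absolute parallelism; for $\nu=2$ a Cartan connection does exist, but because the geometry is parabolic, not merely because $\g$ is finite-dimensional. More seriously, the structure group is now the non-reductive Borel subgroup $B$. Reductions of $P$ to $G_0$ (Weyl structures) exist but are not canonical.

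In fact no canonical linear connection can exist when $\nu=2$. On the flat model $SU(1,2)/B$, elements of $\exp(\g_2)\subset B$ fix the origin $o$. They act trivially on $\g/\mathfrak{p}\cong T_oM$, since $[\g_2,\g_{-2}]\subset\g_0$ and $[\g_2,\g_{-1}]\subset\g_1$. Yet they act nontrivially near $o$. Such maps cannot preserve any linear connection, because an affine map fixing a point with identity differential is the identity near that point.

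\textbf{How to fix it.} Make the case split explicit. For $\nu=2$, get the Cartan connection and the equivariant solution from Cartan's construction, with no linear connection claimed. For $\nu>2$, your argument is exactly the paper's: Proposition~\ref{P1} stops the prolongation at degree~$0$, and the structure group acts semisimply, so $G_0$-invariant complements to the image of $\partial$ give a normal Cartan connection. The reductive splitting $\g=\m\oplus\g_0$ then makes $\omega_{\g_0}$ a principal connection, inducing a linear connection on $TM\cong P\times_{G_0}\m$.

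One minor point: the invariant complements exist because $Z$ and $J$ act semisimply on $\m$ and on the Spencer cochains. Abstract reductivity of $G_0$ is not what gives them.
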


The equivariancy means that the differential invariants of CR structures descend (from the frame bundle) to the original manifold, 
and hence are free of auxiliary group parameters, unavoidable with the general construction of absolute parallelism
by the method of moving frames, see partial cases \cite{BES,MS}. 
A Cartan connection gives a finer structure of curvatures, in particular allowing
to control symmetry reductions in twistor-like constructions. 
An invariant linear connection on the base allows for direct holonomy reductions.
 
Forgeting about the complex structure, $D$ is a smooth rank 2 distribution on a real manifold $M$. 
Nonholonomic (bracket-generating) distributions of rank 2 are well-studied. In particular, the most symmetric distribution 
of this kind is the contact distribution in the jet space $J^{m-2}(\R)$, also known as the Goursat distribution; it is the only one
of infinite type (possessing infinite-dimensional symmetry algebra). 
We will recall the details in Section \ref{S2}, 
in particular noticing that this distribution has reduced growth vector $(2,1,\dots,1)$,
but remark momentarily 
that the Bloom-Graham type of such structure is $(2,3,4,\dots,m)$.

In the following statement the term ``model'' is understood as a polynomial model of Beloshapka \cite{B1,B2}, 
but in our context it can be also thought of as a standard model of Tanaka \cite{T}.


 \begin{theorem}\label{Th2}
Let $m=n+1$ be the dimension of a homogeneous CR model $M\subset\C^n$ with nonholonomic CR structure $(D,J)$ 
of CR dimension 1 and reduced growth vector $(2,1,\dots,1)$. 
If $m$ is odd $>2$ or $m=4$, then the model is unique: it is a tube over the affine homogeneous curve 
 \begin{equation}\label{GOU}
\gamma=\{x_k=\lambda^k:\lambda\in\R\}_{k=1}^n\subset\R^n:\quad M=\gamma\times i\R^n.
 \end{equation}
If $m$ is even, $m\ge6$, then either $M$ is a tube as above or $M$ is non-tubular; the latter case is based on 
one distribution $D$ (different from the tubular one) with a one-parameter family of complex structures $J$ on it.
In both cases, for $m>3$, the symmetry dimension is $m+1$ (in terms of Theorem \ref{Th1}, $r=1$).
 \end{theorem}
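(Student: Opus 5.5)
The plan is to work on the level of Tanaka symbols. A homogeneous model with reduced growth vector $(2,1,\dots,1)$ has a graded nilpotent symbol $\m=\g_{-1}\oplus\g_{-2}\oplus\dots\oplus\g_{-\mu}$ with $\dim\g_{-1}=2$, $\dim\g_{-i}=1$ for $2\le i\le\mu=m-1$, generated by $\g_{-1}$. The first step is to classify such fundamental graded Lie algebras. Choose a basis $X,Y$ of $\g_{-1}$ with $Z_2=[X,Y]$, and note that each $\g_{-i-1}=[\g_{-1},\g_{-i}]$ is one-dimensional, so $\op{ad}_{\g_{-1}}:\g_{-i}\to\g_{-i-1}$ has a one-dimensional kernel $\ell_i\subset\g_{-1}$. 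Imposing the Jacobi identity, I expect that after normalisation either all $\ell_i$ coincide (the filiform algebra $[X,Z_i]=Z_{i+1}$, $[Y,Z_i]=0$), which is the symbol of the Goursat distribution on $J^{m-2}(\R)$; or, only for even $m\ge6$, there is one further nilpotent algebra of this type. This second algebra is the graded filiform algebra with the extra brackets $[Z_i,Z_{m-1-i}]\neq0$ reaching the top degree, in the spirit of Vergne's classification of graded filiform algebras, whose second family $\mathfrak{Q}_n$ exists only in even dimension. For odd $m$ and for $m=4$ the Jacobi identity should force the first case.

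The second step is to add the complex structure. $J$ acts on $\g_{-1}$ and must be compatible with the grading. In the filiform case $\ell=\langle Y\rangle$ is invariant under the automorphisms of $\m$, and the degree-zero automorphism group acts on $\g_{-1}$ by lower triangular matrices, for instance $X\mapsto aX+bY$, $Y\mapsto cY$. This group acts transitively on the complex structures on $\g_{-1}$, since each such $J$ is determined by a point of the upper half-plane and the affine group $\lambda\mapsto(a\lambda+b)/c$ acts transitively there. So in that case the CR symbol is unique, and its realisation is the tube over $\gamma$ in \eqref{GOU}. One checks directly that $\langle\p_{x_k}\rangle$ lifted as $\p_{z_k}$, together with the lifts of $\lambda$-translations and dilations, gives the required symmetries. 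For the second algebra the automorphism group in degree $0$ is smaller, essentially diagonal and one-dimensional after fixing the structure constants, so the orbits of $J$ form a one-parameter family. This is the family claimed in the statement, and it is non-tubular because $\m$ is not the filiform symbol of a tube over a curve.

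The third step computes the symmetry dimension. The Tanaka prolongation of $(\m,J)$ contains $\m$, which has dimension $m$, together with $\g_0^J=\{A\in\op{der}_0(\m):[A,J]=0\}$ and the positive part. In the filiform case $\g_0^J$ consists of the derivations preserving $\ell$ and commuting with $J$, which forces them to be multiples of the grading element, so $\dim\g_0^J=1$. In the second case the grading element survives but nothing else does. It remains to show $\g_1=0$ for $m>3$, so that the symmetry dimension is $m+1$. For $m=3$ this fails, since the prolongation is $\mathfrak{su}(2,1)$.

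I expect the main obstacle to be the vanishing of $\g_1$, together with making the Jacobi-identity classification of Step 1 complete. For $\g_1$ I would argue degree by degree. A derivation $\phi:\m\to\m$ of degree $+1$ is determined by $\phi|_{\g_{-1}}:\g_{-1}\to\g_0^J$, which is two-dimensional. The constraints $\phi([X,Z_i])=[\phi X,Z_i]+[X,\phi Z_i]$ propagated up to $\g_{-\mu}$, combined with $[\phi(Jv),w]$-compatibility, should kill these two parameters as soon as $\mu\ge3$, that is $m\ge4$. Alternatively, one can compare with Theorem \ref{Th1}, where $r\le2$, and show that the value $r=2$ requires a non-trivial $\g_0^J$ of dimension 2, which is excluded above.
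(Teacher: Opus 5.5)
This is essentially the paper's argument: your Step 1 is Proposition 2, your Step 2 is Proposition 3, and your Step 3 is $\g_0=\R Z$ combined with the prolongation rigidity of Proposition 1. In detail, for Step 1 the paper proves the classification directly by induction on depth via central extensions, where you quote Vergne's $L_n$/$Q_n$ classification of naturally graded filiform algebras; Step 2 is the Borel versus two-dimensional Cartan $\mathfrak{der}_0(\m)$ acting on the complex structures. There are two small slips: first, the translational symmetries of the tube $\gamma\times i\R^n$ are the imaginary translations $i\p_{z_k}$, since real translations $\p_{x_k}$ do not preserve it; second, for the second algebra the degree-zero automorphism group is the two-dimensional diagonal torus, and only its quotient by the scalars acts effectively on the space of $J$'s, which is what leaves a one-parameter family.
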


We will exhibit the CR structures abstractly in Section \ref{S4}.
Let us discuss small dimensions. For $n=2$ the unique model structure is the standard quadric in $\C^2$ \cite{Po}, 
which is the maximal symmetry model $SU(1,2)/B$ ($B$ stands for the Borel subgroup). 
For $n=3$ the unique model structure of CR codimension 2 is sub-parabolic, meaning its distribution $D$ comes from the 
flag variety $Sp(4,\R)/B$, while the complex structure $J$ reduces it to $P'/G_0'$, where $P'=G_0'\ltimes\exp(\m)$ 
is the subgroup of opposite parabolic\footnote{Numeration of parabolic subgroup agrees with Bourbaki, 
it corresponds to crossed nodes on the Dynkin diagram.} subgroup $B^\text{op}$ to $B=P_{12}$ with the homothety
$G_0'=\R_\times\subset B^\text{op}\cap B=G_0\simeq(\R_\times)^2$, 
and $\m=\g_-$ for the Borel grading of $\g=\mathfrak{sp}(4,\R)$ corresponding to the Engel distribution in $\R^4$.
For $n=4$ the model is also unique \cite{Z}, its distribution is no longer parabolic but Goursat.
For higher dimensions the unicity is violated, for instance in the case $n=5$ in addition to the Goursat distribution
there is the rank 2 distribution associated to the flag variety $G_2/B$ and it carries a one-parameter family of CR structures.
These will be realized explicitly as real submanifolds $M^6\subset\C^5$ in Section~\ref{S7}.

Theorem \ref{Th2} implies the unicity of homogeneous tubular CR models
of CR dimension 1 and reduced growth vector $(2,1,\dots,1)$. 
The tube condition is a partial case of rigid surfaces, where $M$ is given in coordinates
by $\op{Re}(z_k)=f_k(z_1,\bar{z}_1)$, $k=2,\dots,n$. 
(Tubular CR manifolds have an Abelian subalgebra of symmetries of $\dim=n$, while rigid surfaces have that of $\dim=n-1$.) 
The following result was also announced in \cite{Pa}.

 \begin{cor}\label{Cor2}
Rigid homogeneous CR models $M\subset\C^n$ of CR dimension 1 and reduced growth vector $(2,1,\dots,1)$ 
are unique (up to local biholomorphism) in every CR codimension $n>1$.
 \end{cor}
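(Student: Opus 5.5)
Corollary \ref{Cor2} follows from Theorem \ref{Th2} once we show that every rigid homogeneous model of this type is biholomorphic to the tube over the curve \eqref{GOU}. The plan is to show that the rigid symmetries force the model onto the tubular branch of the classification in Theorem \ref{Th2}.

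First, the rigid form itself. A rigid surface $\op{Re}(z_k)=f_k(z_1,\bar z_1)$, $k=2,\dots,n$, admits the commuting translations $\p/\partial(\op{Im} z_k)$, $k=2,\dots,n$. These span an Abelian subalgebra $\mathfrak a$ of the symmetry algebra $\g$ with $\dim\mathfrak a=n-1$. The fields are transversal to $D$, since $D$ is spanned by the real and imaginary parts of the $(1,0)$-field $\p_{z_1}+\sum_k(\ldots)\p_{z_k}$. By homogeneity of a polynomial (Beloshapka/Tanaka) model, the graded symmetry algebra is $\g=\m\oplus\g_0$, where $\m=\g_{-1}\oplus\g_{-2}\oplus\dots\oplus\g_{-(n)}$ is the symbol. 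By Theorem \ref{Th2}, $\dim\g=m+1$, so $\g_0$ is one-dimensional and spanned by the grading element; the positive part of $\g$ is zero.

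Next, locating $\mathfrak a$. Weighted homogeneity lets us take the translations to be graded elements. Since $\mathfrak a$ is transversal to $D$, it lies in $\g_{-2}\oplus\dots\oplus\g_{-n}$. This part has dimension $n-1$ because the reduced growth vector is $(2,1,\dots,1)$. Hence $\mathfrak a=\g_{\le -2}$, and the derived part of the symbol, $[\m,\m]$, must be Abelian. That is the key constraint.

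Then I check this condition against the cases of Theorem \ref{Th2}. For $m$ odd or $m=4$ the model is unique and tubular, so there is nothing to prove. For even $m\ge6$, I compare the two candidate distributions from Section \ref{S4}:
\begin{itemize}
\item For the Goursat (tubular) symbol $\m$, the filtration has $[\g_{-1},\g_{-i}]=\g_{-i-1}$ and $\g_{\le-2}$ is Abelian.
\item For the other distribution (e.g.\ $G_2/B$ when $n=5$), there is a bracket $[\g_{-2},\g_{-3}]\neq0$, so $\g_{\le-2}$ is non-Abelian.
\end{itemize}
The non-tubular models therefore carry no $(n-1)$-dimensional Abelian subalgebra transversal to $D$, and cannot be rigid. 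What remains is the Goursat symbol with a possibly different $J$; the tubular model realizes it, and by Theorem \ref{Th2} it is the unique model on that distribution. So the rigid model is the tube \eqref{GOU}, unique up to local biholomorphism.

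The main obstacle is the second step: arguing rigorously that the rigid translations may be taken graded and inside $\g_{\le-2}$, rather than merely abstractly isomorphic to a subalgebra of $\g$. My first attempt would exploit the weighted homogeneity of the polynomial model, which makes the grading element normalize $\mathfrak a$ and so lets $\mathfrak a$ decompose into graded pieces. If that fails, the fallback is a direct dimension argument: an $(n-1)$-dimensional Abelian subalgebra of the $(m+1)$-dimensional algebra $\g$, transversal to $D$, must project isomorphically onto $\g_{\le-2}$ modulo $\g_{-1}\oplus\g_0$, and this forces $\g_{\le-2}$ to be Abelian in the associated graded.
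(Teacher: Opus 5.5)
Your argument is correct, but it does not follow the paper's route.

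\textbf{The paper's route.} The paper never uses that the rigid translations are transversal to $D$. It observes that a rigid surface carries an $(n-1)$-dimensional Abelian symmetry algebra. It then asserts that the largest Abelian subalgebra of $\m\rtimes\R Z$ for the GNLA \eqref{nGou}, with $n=2k+1$, has dimension $k+1<n-1$. So the non-tubular models are excluded by a pure dimension count, and $n=2,3$ are covered by uniqueness.

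\textbf{Your route.} You exploit transversality instead. For $M=G/G_0$, the elements of $\g$ evaluating into $D_o$ are exactly those in $\g_{-1}\oplus\g_0$. Hence $\mathfrak a\cap(\g_{-1}\oplus\g_0)=0$, and since $\dim\mathfrak a=\dim\g_{\le-2}$, $\mathfrak a$ is the graph of a linear map $\phi:\g_{\le-2}\to\g_{-1}\oplus\g_0$. Take $x\in\g_{-i}$ and $y\in\g_{-j}$ with $i,j\ge2$. The $\g_{-i-j}$-component of $0=[x+\phi(x),y+\phi(y)]$ is exactly $[x,y]$, because every other term has degree $\ge-\max(i,j)-1>-i-j$.

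This is the precise form of your ``fallback''. It makes the detour through weighted homogeneity unnecessary. That detour only works if the rigid coordinates coincide with the weighted ones.

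\textbf{A small correction.} For general $n=2k+1$ the non-vanishing bracket in $\g_{\le-2}$ is $[e_2,e_{n-2}]=-e_n$. The bracket $[\g_{-2},\g_{-3}]$ is zero unless $n=5$.

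\textbf{What each route buys.} Yours reduces the algebra to a trivial one-bracket check. The paper's instead relies on a maximal-Abelian-subalgebra computation that it states without proof, and that computation must also handle non-graded subalgebras and those with a $Z$-component. In exchange, the paper's conclusion is stronger: it rules out any $(n-1)$-dimensional Abelian symmetry subalgebra, transversal to $D$ or not.
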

 
Finally, we consider the general models of CR distributions of CR dimension 1. Without complex structure $J$ this
corresponds to rank 2 distributions $D$, which were studied in detail in \cite{AK,DZ}.
If the reduced growth vector starts $(2,1,1,\dots)$, including but not limited to structures considered in Theorem \ref{Th2},
the distribution $D$ allows reduction (local in $M$) by the Cauchy characteristic of $D^2$. 
If the reduced growth vector starts $(2,1,2,\dots)$, the derived distribution $D^2$ is free of Cauchy characteristics,
and then Theorem 3 of \cite{AK} claims that such flat (model) rank 2 distribution is a result of successive integrable extensions 
of the Hilbert–Cartan equation $y'=(z'')^2$ supported on the 5-manifold $\bar{M}=G_2/P_1$. We conclude:

 \begin{theorem}\label{Th3}
Let $m=n+1$ be the dimension of a homogeneous CR model $M\subset\C^n$ with nonholonomic CR structure $(D,J)$ 
of CR dimension 1 with the symbol $\m=\g_-$ of $D$ of depth $\nu\ge3$. 
Then $M=M_\nu$ allows a tower of symmetry reductions to CR manifold $(M_i,D_i,J_i)$ of CR dimension 1,
succesively decreasing in depth $i$. This ends either with the Engel CR structure of reduced growth $(2,1,1)$
or with the Hilbert–Cartan CR structure of reduced growth $(2,1,2)$. Both reduce to the CR quadric in $\C^2$.
Conversely, any homogeneous CR model $(M,D,J)$ is obtained from the reduced $(\bar{M},\bar{D},\bar{J})$ 
by a sequence of so-called integrable extensions.
 \end{theorem}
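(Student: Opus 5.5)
Throughout I work with the graded nilpotent symbol $\m=\g_{-\nu}\oplus\dots\oplus\g_{-1}$ of the model, with $\g_{-1}\simeq\C$ carrying $J$. Let $\g_0=\mathfrak{der}_0(\m,J)$ be the algebra of degree-zero derivations of $\m$ commuting with $J$ on $\g_{-1}$, and let $\g=\op{pr}(\m,\g_0)$ be its Tanaka prolongation; the homogeneous model is $M=G/G_{\ge0}$. The approach is to pass to a quotient of $\m$ by an ideal that is central and that every symmetry preserves. Geometrically this quotient is a reduction along a fibration whose fibers are orbits of a one-dimensional central subgroup. I then iterate in $\nu$ and show that the chain stops only at depth $3$.

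\emph{Step 1 (choice of ideal).} First suppose $\dim\g_{-\nu}=1$. Then $\g_{-\nu}$ is a one-dimensional ideal lying in the center of $\m$, and every graded derivation preserves it. Set $\bar\m=\m/\g_{-\nu}$. Because $\nu\ge3$, the component $\g_{-1}$ survives in the quotient, so $\bar\m$ is again a fundamental graded algebra with $\bar\g_{-1}=\C$. It has depth $\nu-1$ and the same $J$. Now suppose $\dim\g_{-\nu}>1$. Here I quotient by a $\g_0$-invariant line in $\g_{-\nu}$. Such a line exists because $\g_0\subset\mathfrak{gl}(1,\C)\simeq\C$ acts on $\g_{-\nu}$ through commuting operators, so it has a common eigenline; a real line exists once I also use the grading element. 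This reduces $\dim M$ by one, possibly without lowering the depth. A descending induction on $\dim\m$ therefore still terminates, and the depth drops once $\g_{-\nu}$ is exhausted.

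\emph{Step 2 (descent of symmetries).} The ideal $\mathfrak a=\g_{-\nu}$, or the chosen line, is $\g_0$-invariant. I need it to be an ideal of all of $\g$ modulo $\g_{\ge0}$, so that its exponential orbits give a $G$-equivariant fibration $M\to\bar M=M/\exp\mathfrak a$. This is exactly where the hypothesis that $M$ is a homogeneous model enters. From Theorem \ref{Th1} the positive prolongation $\g_+$ is at most one-dimensional. It consists of the grading element's partner together with possibly one element of degree $1$. I would check directly that $[\g_1,\mathfrak a]\subset\mathfrak a$ for degree reasons, since $\nu\ge3$ gives $[\g_1,\g_{-\nu}]\subset\g_{1-\nu}$. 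Only when $\nu$ is small does this risk leaking into $\g_{-1}$. If needed I will replace $\g$ by its subalgebra stabilizing $\mathfrak a$; this still acts transitively because $\m$ does. The upshot is that $\bar M$ is a homogeneous CR manifold of CR dimension $1$ with symbol $\bar\m$, carrying the induced $\bar D,\bar J$. This gives the tower $M_\nu\to M_{\nu-1}\to\cdots$.

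\emph{Step 3 (terminal cases and the converse).} I stop when the depth is $3$. Reduced growth vectors of fundamental rank-$2$ algebras of depth $3$ are $(2,1,1)$ and $(2,1,2)$, which are the Engel and Hilbert–Cartan cases. Starting from $(2,1,2)$, the Cauchy characteristic argument and Theorem 3 of \cite{AK} show that the tower structure identifies it with $G_2/P_1$-type data. Quotienting by the depth-$3$ part in either case gives the Heisenberg algebra with its $J$, that is, the CR quadric in $\C^2$. For the converse, each step $M_i\to M_{i-1}$ is a principal $\R$ (or line) bundle. Its total space is determined by a closed $\bar G$-invariant $2$-form on $M_{i-1}$ realizing the central extension cocycle, and that is by definition an integrable extension. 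Reading the tower backwards gives the claim.

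The main obstacle is Step 2, namely guaranteeing that the full symmetry algebra, and not just $\g_{\le0}$, descends. I would first try the degree argument above together with the small size of $\g_+$ from Theorem \ref{Th1}. Failing that, I would accept the reduction by the transitive subalgebra, which suffices for the homogeneous statement.
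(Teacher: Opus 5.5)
Your overall route is the paper's: view the model as $\exp\m$, reduce by the central lowest-degree part, iterate down to depth $3$, and read the tower backwards as central (hence integrable) extensions. However, Step 1 as written rests on a false claim. When $\dim\g_{-\nu}>1$ there need not be any $\g_0$-invariant real line in $\g_{-\nu}$. Commuting real operators need not share a real eigenline, and the grading element acts on $\g_{-\nu}$ by the scalar $-\nu$, so it cannot help. Concretely, take $\mathfrak{ell}_7$ from \eqref{ell7} with $\g_0=\mathfrak{gl}(1,\C)$ and extend $Je_1'=e_1''$, $Je_1''=-e_1'$ as a derivation. This gives $Je_2=0$, $Je_3'=e_3''$, $Je_3''=-e_3'$, and then $Je_4'=2e_4''$, $Je_4''=-2e_4'$, which is a rotation of $\g_{-4}$. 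It is the term $2s\p_t-2t\p_s$ of $S_{0J}$ for \eqref{2122} at $a=b=0$. Likewise $J$ rotates $\g_{-5}$ of $\mathfrak{ell}_8$. Both are models with $r=2$.

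The remedy is not to split $\g_{-\nu}$. The whole component $\g_{-\nu}=\mathfrak{z}(\m)$ is a central ideal and is automatically $\g_0$-invariant. Moreover, $\m/\g_{-\nu}$ is again fundamental with center exactly $\g_{1-\nu}$: an element of degree $>1-\nu$ that is central modulo $\g_{-\nu}$ must commute with $\g_{-1}$ for degree reasons, hence lies in $\mathfrak{z}(\m)$. This also makes the depth drop by exactly one at every step, as the statement requires; your line-by-line tower has intermediate stages of equal depth. Dropping the invariance requirement and using an arbitrary central line would also give a reduction, with $\m$ still transitive, but then the $J$-symmetry need not descend.

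Second, the ``main obstacle'' in Step 2 comes from misreading Theorem \ref{Th1}. By Proposition \ref{P1}, $\op{pr}_+(\m,\g_0)=0$ for $\nu\ge3$, so $\g=\m\rtimes\g_0$ with $\dim\g_0=r\le2$. The grading element lies in $\g_0$, and there is no element of degree $1$. Hence $\g_{-\nu}$ is an ideal of the full symmetry algebra and all symmetries descend, with no need to pass to a stabilizer. Your degree argument was backwards anyway: $[\g_1,\g_{-\nu}]\subset\g_{1-\nu}$ is precisely not contained in $\mathfrak a=\g_{-\nu}$, so a nonzero $\g_1$ would have obstructed the descent.

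Finally, at depth $3$ you need no appeal to \cite{AK}, since the symbol is either the Engel algebra or $\m_{HC}$. To speak of \emph{the} Engel and Hilbert--Cartan CR structures, you should record that $J$ is unique up to automorphism in both cases. For Engel this is Proposition \ref{P3}; for $\m_{HC}$ it holds because $\mathfrak{der}_0(\m_{HC})=\mathfrak{gl}(2,\R)$ acts transitively on complex structures on $\g_{-1}$.
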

 
We will explain integrable extensions, as well as prolongations, symmetry reductions and reductions by Cauchy characteristics 
in Section \ref{S5} and also exhibit some important models abstractly. 
Then we will realize the simplest of them in coordinate form in Section \ref{S7}.
Let us indicate that part of the results in this paper were known earlier (as we reference) yet in different contexts, 
and we unite all those here.

\section{Bloom-Graham type, growth vector and Tanaka prolongation}\label{S2}

Let us formulate basic definitions and recall some fundamental results of the theory.
Given a vector distribution $D$ on a manifold $M$ its strong derived flag is 
 $$
D^{[1]}=D,\ D^{[2]}=[D,D],\ D^{[3]}=[D^{[2]},D^{[2]}],\ D^{[4]}=[D^{[3]},D^{[3]}],\ \dots
 $$
and its weak derived flag is 
 $$
D^{1}=D,\ D^{2}=[D,D],\ D^{3}=[D,D^{2}],\ D^{4}=[D,D^{3}],\ \dots
 $$
Here and in what follows, by the brackets of distributions we mean those by the corresponding module of sections, and if 
the resulting $C^\infty(M)$-module is projective (which we adapt) it is the module of sections of the resulting distribution.
Thus $\Gamma([D,D'])=[\Gamma(D),\Gamma(D')]$, etc. For every point $x\in M$ these flags define filtrations of $T_xM$
(depending on $x$ but we will often omit the point from notations) $\ldots\subset D^{[k]}\subset D^{[k+1]}\subset\dots$
and $\ldots\subset D^k\subset D^{k+1}\subset\dots$; these are subordinated as follows: $D^k\subset D^{[k]}$.

By the iterated Jacobi identity, the conditions $\exists\,r:D^{[r]}=TM$ and $\exists\,s:D^s=TM$ are equivalent, even though
the minimal such $r$ may be smaller than the minimal such $s$. It turns out that the weak derived flag is more convenient
for computations with invariants of $D$. In the case $D$ is equipped with a complex structure $J$, $J^2=-\op{id}_D$,
which satisfies the integrability condition, the pair $(D,J)$ is called (an abstract) CR structure. 
The Bloom-Graham type of such structure specifies the growth of the weak derived flag of $D$, 
even including pseudo-stabilizations, when the growth stops at the given point $x\in M$ for some steps.

In the regular case, when dimensions of the flag filtrands are constant\footnote{In this case the bracket-generating
property is equivalent to CR manifold being minimal; in general, minimality is equivalent to the bracket-generating property
in an open dense subset of $M$.} (in the analytic case, the regularity is achieved on
an open dense subset of $M$) the strong growth vector is $(d_{[1]},d_{[2]},\dots)$, $d_{[k]}=\op{rank}D^{[k]}$,
while the weak growth vector is $(d_{1},d_{2},\dots)$, $d_{k}=\op{rank}D^{k}$. We will also use the
reduced growth vector $(r_1,r_2,\dots)$, where $r_k=d_k-d_{k-1}$ for $k>0$ ($d_0=0$).  

We will however require a stronger invariant associated to $D$ at $x$ (symbol of $D$ or Carnot algebra) 
 $$
\m(x)=\oplus_{i<0}\g_i=\g_{-\nu}\oplus\dots\oplus\g_{-1},
\quad\text{where }\g_{-k}=D^k/D^{k-1},\ D^0=0.
 $$
This is not only a collection of vector spaces (which can be characterized by their dimensions) but also a
graded nilpotent Lie algebra (GNLA), with bracket of vectors (or rather their equivalence classes in GNLA) 
induced by commutators of representative tangent vector fields, see \cite{T}. 

This GNLA $\m$ abstractly will be assumed to satisfy two fundamental properties: (i) bracket-generated by $\g_{-1}$,
(ii) the center $\mathfrak{z}(\m)$ coincides with the lowest grading $\g_{-\nu}$.
Cauchy characteristics correspond to $x\in\g_{-1}\setminus0$: $[x,\m]=0$, they give a particular violation of 
the requirement (ii); our $D$ will be free of them.

The Tanaka prolongation of $\m$ is the maximal graded Lie algebra $\g=\op{pr}(\m)=\oplus\g_k$ such that $\m=\g_-$
and $[\g_{-1},x]=0$ for $x\in\g_{\ge0}$ implies $x=0$. This is constructed successively in grading $k$.
In particular, $\g_0=\mathfrak{der}_0(\m)$ consists of grading preserving derivations (it always contains the
grading element $Z$ given by $[Z,x]=kx$ for $x\in\g_k$) and $\g_1$ is identified with the space of elements in 
$\op{Hom}(\g_{-1},\g_0)$ that (uniquely) extend to a homomorphism $\m\to\m\oplus\g_0$ of degree $1$ 
satisfying the Jacobi identity, whenever defined. 

For CR structures, the 0-th prolongation\footnote{In order to avoid confusion we will denote, in what follows, 
by $\hat{\g}_0=\mathfrak{der}_0(\m)$ the 0-th prolongation for the distribution $D$.} consists of derivations of $\m$ of degree 0
that preserve $J$ on $D$: 
 \begin{equation}\label{eqg0}
\g_0=\mathfrak{der}_0(\m)\cap\mathfrak{gl}(\g_{-1},J)\subset\op{End}(\g_{-1}). 
 \end{equation}
The prolongation $\g_1$ is defined as the subspace of vectors in $\op{pr}_1(\m)$ for which the bracket with $\g_{-1}$ lands 
in $\g_0$, given by \eqref{eqg0}; the higher prolongations $\g_k$ are modified similarly. 
This gives the prolongation $\g=\op{pr}(\m,\g_0)$. 

If these Tanaka algebras $\g(x)$ are finite-dimensional for all $x\in M$, then the symmetry algebra $\mathfrak{s}$ of $D$, 
or respectively $(D,J)$, is bounded in dimension as follows:
 \begin{equation}\label{dims}
\dim\mathfrak{s}\leq\mathfrak{inf}_{x\in M}\dim\g(x).
 \end{equation}
For strongly regular structures, when GNLA $\g(x)$ are isomorphic at all points $x$, this is the result of \cite{T}.
In the general case it was proven in \cite{K,K2}. This gives a universal bound to the symmetry dimension.

We will focus on rank 2 distributions $D$, i.e.\ $\dim\g_{-1}=2$. An important class of distributions
consist of those with reduced growth vector $(2,1,1,\dots,1)$. This contains the Goursat distributions \cite{C},
that are the Cartan distributions on the jet spaces $J^k(\R)$. Recall that by the Lie-Backlund theorem \cite{AK} the symmetries of 
all such distributions are prolongations of the contact algebra from $J^1(\R)$, so in this case $\mathfrak{s}=\mathfrak{cont}(3)$. 
In fact, these are the only distributions of rank 2 with infinite-dimensional symmetry algebra \cite{C,MT}.

By the von Weber theorem \cite{We,GKR} the distribution is Goursat if and only if its strong and weak growth vectors 
coincide everywhere $d_{[k]}=d_k=k+1$ for $k>0$. The condition $d_k=k+1$ is not sufficient for the equivalence to Goursat,
and we will describe all other possibilities in Section \ref{S4}. But the condition $d_{[k]}=k+1$ is generically sufficient,
meaning the isomorphism of germs at generic points $x\in M$. This does not extend to all points, and the stratification
of non-regular points form a monster tower \cite{MZ}. The corresponding Bloom-Graham type is
a longer vector, and the model is more complicated. However we will restrict to regular points and such singularities will not occur.
(See, however, Remark \ref{Rk1} in Section \ref{S5}.)

Let us also make a remark on distributions on generalized flag varieties $M=G/P$, where $G$ is semi-simple and $P$ parabolic.
The Lie algebra $\mathfrak{p}=\op{Lie}(P)\subset\g=\op{Lie}(G)$ is marked by crosses on the Dynkin diagram;
up to conjugation its choice is equivalent to $\mathbb{Z}$-gradation 
$\g=\g_{-\nu}\oplus\dots\oplus\g_0\oplus\dots\oplus\g_\nu$,
with $\m=\g_{-\nu}\oplus\dots\oplus\g_{-1}\simeq T_oM$ and $\mathfrak{p}=\g_0\oplus\dots\oplus\g_\nu$
(simple roots not in $\g_0$ correspond to crossed nodes). 
The reductive part $\g_0$ has Levi part $\g_0^{ss}$ corresponding to removal of crosses on the Dynkin diagram and 
the center $\mathfrak{z}(\g_0)$ has rank equal to the number of crosses. The (real) distribution $D$ on $\exp\m$ corresponds
to $\g_{-1}$ and is nonholonomic of rank 2 only for the cases $A_2/P_{12}$, $B_2/P_2=C_2/P_1$,
$B_2/P_{12}=C_2/P_{12}$, $G_2/P_1$ and $G_2/P_{12}$. In the first two cases $\m=\mathfrak{heis}(3)$ and so $(M,D)$
is a contact 3-manifold; in the third case $\m$ has reduced growth vector $(2,1,1)$ and $(M,D)$ is an Engel structure.
In all these cases $\op{pr}(\m)=\mathfrak{cont}(3)$.
On the other hand, for the last indicated cases when $\g=\op{Lie}(G_2)$ we have $\op{pr}(\m)=\g$.

\section{Prolongation rigidity and symmetry bound}\label{S3}

Let $\m=\g_{-\nu}\oplus\dots\oplus\g_{-1}$ be the symbol of the distribution $D$, or abstractly a fundamental GNLA.
The number $\nu$ is called the depth of $\m$. For $\dim\g_{-1}=2$, we get $\g_0\subset\op{End}(\g_{-1})=\mathfrak{gl}(2,\R)$.
However $\g_0$ must preserve a complex structure $J$ on $\g_{-1}$, so $\g_0\subset\mathfrak{gl}(1,\C)=\mathfrak{co}(2)$.
Thus $\dim\g_0=r\in\{1,2\}$, and if $r=2$ then $\g_0=\mathfrak{gl}(1,\C)=\langle Z,J\rangle$, 
while if $r=1$ then $\g_0=\R(Z)$ is generated by the grading element $Z$.

By the non-holonomic assumption, $\nu>1$. If $\nu=2$, then $\m=\mathfrak{heis}(3)$ and 
for $\g_0=\mathfrak{gl}(1,\C)$ we have $\op{pr}(\m,\g_0)=\mathfrak{su}(1,2)$,
which corresponds to the classical case of 3-dimensional CR sphere.
In the case $\g_0=\R$ we have $\op{pr}(\m,\g_0)=\op{pr}(\m)$, which is the infinite type discussed above.

 \begin{prop}\label{P1}
If $\nu>2$ then $\op{pr}_+(\m,\g_0)=0$. 
 \end{prop}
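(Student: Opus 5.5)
The plan is to show that $\g_1=\op{pr}_1(\m,\g_0)=0$. The vanishing of all higher $\g_k$, $k\geq2$, then follows from the standard Tanaka argument. If $x\in\g_{k}$, $k\ge2$, and $\g_{k-1}=0$, then $[\g_{-1},x]\subset\g_{k-1}=0$, so $x=0$ by the nondegeneracy built into the definition of $\op{pr}$. Induction then gives $\op{pr}_+(\m,\g_0)=0$.

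\emph{Setup.} Fix a basis $e_1,e_2$ of $\g_{-1}$ with $Je_1=e_2$, $Je_2=-e_1$, and put $w=[e_1,e_2]$, which spans $\g_{-2}$. The grading element $Z$ acts by $k$ on $\g_k$. The element $J$, when it lies in $\g_0$, acts on $\g_{-2}$ by $Jw=[Je_1,e_2]+[e_1,Je_2]=0$. An element $\varphi\in\g_1$ is determined by $\varphi(e_i)=a_iZ+b_iJ$, with $b_i=0$ when $r=1$. Its action on $\m$ is forced by the derivation rule $[\varphi,[x,y]]=[[\varphi,x],y]+[x,[\varphi,y]]$.

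\emph{Key computations.} I will compute the action of $\varphi$ degree by degree.

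1. On $\g_{-2}$ one gets
$$[\varphi,w]=(a_2-b_1)e_1-(a_1+b_2)e_2 .$$

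2. On the generators $u_i=[e_i,w]$ of $\g_{-3}$ one gets
$$[\varphi,u_1]=-(3a_1+b_2)w,\qquad [\varphi,u_2]=-(3a_2-b_1)w .$$

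3. The only constraints on $(a_i,b_i)$ come from requiring that these formulas respect the relations of $\m$, i.e.\ that $\varphi$ descends from the free Lie algebra on $e_1,e_2$ to $\m$. The relevant relations are of two kinds. If $\dim\g_{-3}=1$, there is a relation $\alpha u_1+\beta u_2=0$. If $\g_{-3}$ (or a deeper $\g_{-k}$) is the top degree, it is central, so relations $[e_j,u]=0$ hold for $u$ in the top degree.

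\emph{Case analysis.} I then split according to the structure in degree $-3$.

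Case $\dim\g_{-3}=1$. Applying $\varphi$ to $\alpha u_1+\beta u_2=0$ gives one linear equation. A second equation comes from the next relation. Either $\g_{-3}$ is the top degree and $[e_j,\g_{-3}]=0$; or the growth continues, and then there is the relation of degree $-4$ inside $[\g_{-1},\g_{-3}]$. Here I use property (ii): $\g_{-2}$ is not central, because $\nu>2$. This should force all the $a_i,b_i$ to vanish. Roughly, the $Z$-components get weight $3$ from $\g_{-3}$ while $J$ kills $w$. The combination $3a_i\pm b_j$ together with the degree $-2$ data $a_2-b_1$, $a_1+b_2$ then yields a nondegenerate system.

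Case $\dim\g_{-3}=2$. Here there is no relation in degree $3$, so I go to degree $4$. The free algebra has dimension $3$ in degree $4$, so either some relation appears in $\g_{-4}$, or $\g_{-3}$ is the top degree and hence central, giving $[e_j,u_i]=0$. In either situation, applying $\varphi$ and projecting to $\g_{-3}$ gives enough linear equations. I expect that combining the weights of $Z$ on $\g_{-3}$ and $\g_{-4}$ (namely $3$ and $4$) with the $J$-action will kill $a_i,b_i$.

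\emph{Main obstacle.} The hard step is making the case analysis uniform in the depth $\nu$, since relations may first appear only in high degree. My first attempt would be to reduce to a bounded degree. The $\g_0$-equivariance of $\g_1$ lets one decompose $\g_1$ into $Z$- and $J$-weight spaces, and the conditions on $\varphi$ coming from degrees $-1,-2,-3$ together with one further relation (or centrality of the last term) already give a rank-$4$ system in $(a_1,a_2,b_1,b_2)$. If this turns out not to suffice, I would instead invoke the Doubrov--Zelenko description of symbols of rank $2$ distributions \cite{DZ}. That description lists the possible symbols of depth $3$ and $4$, and deeper ones contain these as quotients $\m/\g_{\le-k}$; a nonzero $\varphi$ would descend to such a quotient, so checking the finitely many low-depth cases would finish the proof.
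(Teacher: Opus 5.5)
Your reduction to $\g_1$ and your formulas for $[\varphi,w]$ and $[\varphi,u_i]$ are correct. However, the actual content of the proposition is only announced (``should'', ``I expect''): namely, that the resulting linear conditions force $a_i=b_i=0$ for every admissible $\m$ with $\nu>2$. Both devices you offer for uniformity in $\nu$ fail.

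First, the claim that degrees $\ge-3$ plus one further relation already give a rank-$4$ system is false. For $r=2$ you have $Ju_1=u_2$ and $Ju_2=-u_1$, so $\dim\g_{-3}=2$ and degree $3$ carries no relation. Consider the branch \eqref{ell6}, where your $e_1,e_2,w,u_1,u_2$ are $e_1',e_1'',e_2,e_3',e_3''$. There the first relations are $[e_1,u_2]=[e_2,u_1]=0$ and $[e_1,u_1]=[e_2,u_2]$. Take a $J$-valued $\varphi$, i.e.\ $a_1=a_2=0$. Your formulas give $[\varphi,w]=-b_1e_1-b_2e_2$, $[\varphi,u_1]=-b_2w$ and $[\varphi,u_2]=b_1w$, and all these relations are respected. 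So this layer only yields $a_1=a_2=0$. In $\mathfrak{ell}_6$ the $b_i$ are killed by centrality of $\g_{-4}=\R v$, $v=[e_1,u_1]$, since $[\varphi,[e_1,v]]=-b_2v$ and $[\varphi,[e_2,v]]=b_1v$. But in $\mathfrak{ell}_8$, see \eqref{ell8}, the same $\varphi$ respects every relation of degree $\ge-5$ and is killed only by centrality of $\g_{-5}$.

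Worse, consider the free GNLA truncated at depth $\nu$, for which $\g_0=\mathfrak{co}(2)$. It has no relations in degrees $\ge-\nu$ at all, so every condition on $\varphi$ comes from $[\g_{-1},\g_{-\nu}]=0$. You would have to evaluate $\varphi$ on $\g_{-\nu}$, whose dimension grows exponentially. No bounded-degree computation can close the argument.

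Your fallback via quotients is also invalid. An element $\varphi\in\op{pr}_1(\m,\g_0)$ does not induce an element of $\op{pr}_1$ of $\bar\m=\m/\g_{\le-k}$. In $\bar\m$ the top piece $\g_{-k+1}$ is central, which requires $[[\varphi,x],y]+[x,[\varphi,y]]=0$ for $x\in\g_{-1}$, $y\in\g_{-k+1}$, i.e.\ $[\varphi,\g_{-k}]=0$. This fails already for $\mathfrak{heis}(3)$. Your formulas give $\g_1=\{b_1=3a_2,\ b_2=-3a_1\}$ with $[\varphi,w]=2(a_1e_2-a_2e_1)\neq0$. On the other hand $\op{pr}_1(\R^2,\mathfrak{co}(2))=\{b_1=a_2,\ b_2=-a_1\}$, and the two meet only in $0$. (Were descent valid, the proposition would follow from depth $3$ alone; moreover, there is no finite list of deeper symbols to check.)

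What is missing is a global mechanism. The paper gets it from Yamaguchi's structure theory. Since $\op{pr}(\m,\g_0)$ is finite-dimensional, $\g_1\neq0$ would force a semisimple part containing $\g_+$, of rank $\le r$. Rank $2$ leaves only $\mathfrak{su}(1,2)$, i.e.\ $\nu=2$. Rank $1$ gives an $\mathfrak{sl}(2,\R)$ on whose complement $Z$ would need a symmetric spectrum, which is impossible since $Z$ is negative on $\m$.

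Inside your own set-up, the case $r=1$ can be done uniformly. Normalize $[\varphi,e_1]=Z$ and $[\varphi,e_2]=0$; no $J$-adapted basis is needed when $\g_0=\R Z$. Induction gives $[\varphi,\op{ad}_{e_1}^ke_2]=-\tfrac{k(k+1)}{2}\op{ad}_{e_1}^{k-1}e_2$. This contradicts nilpotency of $\op{ad}_{e_1}$, since $\op{ad}_{e_1}e_2\neq0$.

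For $r=2$, run the analogous chain over $\C$ on the $(+i)$-eigencomponent of $\varphi$ under $J$. It gives only one complex condition, and where the second one comes from depends on $\m$, as the $\mathfrak{ell}_6$/$\mathfrak{ell}_8$ example shows.
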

 
 \begin{proof}
Assume, on the contrary, that $\g_1$ is nontrivial. From $\g_1\subset\op{Hom}(\g_{-1},\g_0)$ it has $\dim\leq4$, 
but using prolongation and the iterated Jacobi identities one may conclude that its dimension is bounded by 2. 
We however invoke Lie theory to get the requested stronger result.

We first claim that $(\m,\g_0)$ has finite type. As already noted, the only infinite type $\m$ with $\dim\g_{-1}=2$
corresponds to the Goursat distribution with $\op{pr}(\m)=\mathfrak{cont}(3)$. If $\nu=2$ then 
$\op{pr}(\m,\g_0)=\mathfrak{su}(1,2)$, while if $\nu>2$ then $\g_{-1}$ has distinguished line, so $\g_0=\R(Z)$
and it is well known that $\op{pr}_+(\m,\g_0)=0$ in this case.
 
Following \cite[Proposition 2.4]{Y} we decompose $\g_{-1}$ into $\g_0$ irreps 
and note that $\g_1$ consists 
of dual copies of some of them. Thus $\dim\g_1\leq2$, and in the case of equality we have $\g_1=\g_{-1}^*$.
Moreover, by Proposition 2.5 of \cite{Y} the semi-simple part $\g^{ss}$ of $\g=\op{pr}(\m,\g_0)$ contains the entire positive part.

Thus if $r=\dim\g_0$ then $\g^{ss}$ is generated by $\g_{-1}'\oplus\g_0'\oplus\g_1$ and has rank $\leq r$;
here $\g_{-1}'\subset\g_{-1}$ is dual to $\g_1$ and $\g_0'=[\g_{-1}',\g_1]$.
Consider at first the case $\op{rank}(\g^{ss})=2$, so $r=2$. Since $\g_0$ is Abelian, the grading of $\g^{ss}$, inherited from $\g$,
is abstractly induced by the Borel subalgebra $\mathfrak{b}=\mathfrak{p}_{12}$.
The algebras $A_1\oplus A_1=\mathfrak{so}(2,2)$, $B_2=\mathfrak{so}(2,3)\simeq\mathfrak{sp}(4,\R)$ and
$G_2$ have no $\g_0$-invariant complex structure on $\g_{-1}$. From the remaining case $A_2$ the real form 
$\mathfrak{sl}(3,\R)$ lacks the same property, but the real form $\mathfrak{su}(1,2)$ 
(admitting parabolic subalgebra $\mathfrak{p}_{12}$) has gradation $\g_{-2}\oplus\g_{-1}\oplus\g_0\oplus\g_1\oplus\g_2$ 
with dimensions $(1,2,2,2,1)$ and $\g_0=\mathfrak{gl}(1,\C)$. Since $\g_{-1}$ bracket-generates $\m$, we conclude 
$\m=\g_{-2}\oplus\g_{-1}=\mathfrak{heis}(3)$. Thus this case corresponds to a CR manifold of dimension 3, 
so the CR structure is of hypersurface type and has depth $\nu=2$.
 
Next consider the case $r=2$ but $\op{rank}(\g^{ss})=1$. This implies $\dim\g_1=1$, so $\g^{ss}=\mathfrak{sl}(2,\R)$.
Due to structure relations of $\mathfrak{sl}(2,\R)$ we have $\g_0\cap\g^{ss}=\langle Z\rangle$.
Thus $\g_+=\g_1$, $\g_0/\langle Z\rangle\simeq\R(J)$ and the nilradical of $\g$ is $\m\ominus(\g_{-1}\cap\g^{ss})$.
This radical decomposes into irreps of $\mathfrak{sl}(2,\R)$, but on each of them $Z$ must have a symmetric spectrum.
Yet $Z$ acts with negative eigenvalues on $\m$. Thus this case cannot happen.

Finally, consider the case $r=1=\op{rank}(\g^{ss})$, in which case $\dim\g_1=1$. Here again $\g^{ss}=\mathfrak{sl}(2,\R)$,
the radical is $\g\ominus\g^{ss}=\g_{-\nu}\oplus\dots\oplus\g_{-2}\oplus(\g_{-1}\ominus(\g_{-1}\cap\g^{ss}))$, 
and the same argument finishes the proof.
 \end{proof}

A similar though not identical proof was given in \cite[\S5.6]{MN}. 
(The author got notice of this fact after posting in arXiv; our proof remains here for completness and
proper references not noted in \cite{MN}.)
 
 \begin{proof}[Proof of Theorem \ref{Th1}]
This follows from Proposition \ref{P1} and estimate \eqref{dims}. 
Note that we do not require homogenuity or strong regularity of $(M,D,J)$ for the claim.
Equality is attained on the standard model $M=\exp(\m)$,
$D=\g_{-1}$ equipped with the left-invariant complex structure $J$ read off $\g_{-1}$.
The model possess $r$ more symmetries, so locally $M$ coincides with $G/H$, where $\op{Lie}=\g=\op{pr}(\m,\g_0)$
and $H=\exp_G(\g_0)\subset G$.
 \end{proof}
 
 \begin{proof}[Proof of Corollary \ref{Cor1}]
Following the standard Tanaka geometric prolongation \cite{T} we obtain a frame bundle $\mathcal{F}\to M$ 
with an absolute parallelism, cf.\ discussion in Section \ref{S6}. 
The case $\nu=2$ corresponds to CR-structures of CR-dimension and CR-codimension 1,
in which case the frame bundle is modelled on $SU(1,2)/B$ and the construction of the connection goes back to Cartan \cite{C}.
 
For $\nu>2$, by Proposition \ref{P1}, the prolongation is done in one step, 
and the resulting bundle has an Abelian structure group $H$ of rank $r=1\vee2$ acting from the right. 
It acts semi-simply on $\g=\m\oplus\g_0$ and on the associated Spencer complex. 
Hence the images of the Spencer differential $\delta$ allow $H$-invariant complements,
which in turn implies the existence of a normal Cartan connection $\omega\in\Omega^1(\mathcal{F};\g)$
with the normalization: the structure functions take value in those complements, cf.\ \cite{Ze,KST}. 

The fundamental invariant of this connection is its curvature $K=d\omega+[\omega,\omega]$. It is horizontal,
meaning that hooking a vertical vector field as one of the arguments yields zero. The components of $K$ with respect to horizontal
vectors of the frame together with their iterated derivatives along the horizontal vectors of the frame give all
differential invariants of the CR structure $(M,D,J)$, thus solving the equivalence problem.

Finally, the principal bundle $\mathcal{F}\to M$ has reductive structure group $CO(2)$ and so it splits the affine
connection into the principal connection and the linear connection on the base.
 \end{proof}

\section{Growth vector $(2,1,\dots,1)$}\label{S4}

In this section we describe GNLA $\m=\g_{-n}\oplus\dots\oplus\g_{-1}$ with 
$r_1=\dim\g_{-1}=2$, $r_i=\dim\g_{-i}=1$ for $i>1$.
We will denote the basis of $\m$ so: $\g_{-1}=\langle e_1',e_1''\rangle$, $\g_{-i}=\langle e_i\rangle$ for $i>1$.
Here $\dim\m=m=n+1$.

Recall that the Goursat distribution is the Cartan distribution of the jet-space $J^{n-1}(\R)$ given in jet-coordinates 
$(x,y_0,y_1,\dots,y_{n-1})$ by $D=\langle \p_x+y_1\p_{y_0}+y_2\p_{y_1}+\dots+y_{n-1}\p_{y_{n-2}},\p_{y_{n-1}}\rangle$.
Its GNLA $\m$ is given by the following relations (as is customary, we indicate only non-trivial relations):
 \begin{equation}\label{Gou}
[e_1',e_1'']=e_2,\ [e_1',e_2]=e_3,\ [e_1',e_3]=e_4,\ \dots,\ [e_1',e_{n-1}]=e_n. 
 \end{equation}
 
 \begin{prop}\label{P2}
A graded nilpotent Lie algebra with reduced growth vector $(2,1,1,\dots)$ is either Goursat given by \eqref{Gou}
or is given by the following relations for $n=\dim\m-1=2k+1$ $(k\in\Z,k>1)$:
 \begin{multline}\label{nGou}
[e_1',e_1'']=e_2,\ [e_1',e_2]=e_3,\ \dots,\ [e_1',e_{n-2}]=e_{n-1},\\
[e_1'',e_{n-1}]=-[e_2,e_{n-2}]=[e_3,e_{n-3}]=...=(-1)^{k-1}[e_k,e_{k+1}]=e_n.  
 \end{multline}
 \end{prop}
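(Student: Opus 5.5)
The proof will go by induction on the depth $n$, building the algebra step by step and adapting the basis as we go. Degree $-2$ is forced: $\g_{-2}=[\g_{-1},\g_{-1}]=\langle e_2\rangle$, so $[e_1',e_1'']=e_2$. For $n=3$, the map $\op{ad}e_2:\g_{-1}\to\g_{-3}$ is onto a line, so it has a one-dimensional kernel. I will choose $e_1''$ to span that kernel and then set $e_3:=[e_1',e_2]$. This gives \eqref{Gou} up to depth 3. For the induction step I assume the truncation $\m/\g_{-n}$ (of depth $n-1$) is already Goursat in a basis satisfying \eqref{Gou} up to $e_{n-1}$, with $[e_1'',e_j]=0$ for $2\le j\le n-2$. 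Then $\g_{-n}$ is a line spanned by the brackets $[e_1',e_{n-1}]=a\,e_n$, $[e_1'',e_{n-1}]=c\,e_n$ and $[e_i,e_j]=c_{ij}e_n$ with $i+j=n$, $i,j\ge2$.

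Next I will write the Jacobi identities that constrain these coefficients. For $i+j=n-1$ the triple $(e_1',e_i,e_j)$ gives $c_{i+1,j}+c_{i,j+1}=0$, because $[e_i,e_j]=0$ in degree $n-1$ by the Goursat truncation. The triple $(e_1',e_1'',e_{n-2})$ gives $c_{2,n-2}=-c$, using $[e_1'',e_{n-2}]=0$. So the $c_{ij}$ alternate in sign along the chain $c_{2,n-2},c_{3,n-3},\dots$, all tied to $-c$. If $n$ is even, the chain reaches the middle term $c_{n/2,n/2}$, which vanishes by antisymmetry. Hence $c=0$ and all $c_{ij}=0$, so $a\neq0$ is forced and the algebra is Goursat. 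If $n$ is odd, the chain is consistent and two cases remain. When $c=0$ we again get Goursat. When $c\ne0$, I replace $e_1'$ by $e_1'+t e_1''$. This preserves all lower relations, since $e_1''$ commutes with $e_2,\dots,e_{n-2}$, and changes $a$ to $a+tc$. Choosing $t=-a/c$ and rescaling $e_n$ yields exactly \eqref{nGou}. The case $n=3$ is excluded because $c=0$ there by the normalization of $e_1''$, so the non-Goursat branch starts at $n=5$, i.e.\ $k>1$.

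It remains to show that the non-Goursat algebra \eqref{nGou} admits no further extension with $r_{n+1}=1$. This is needed so that it appears only as the top of the tower, i.e.\ so that every GNLA of this growth type is either \eqref{Gou} or exactly \eqref{nGou}. I will show that $[\g_{-1},e_n]=0$ in any such extension. Take a hypothetical degree $-(n+1)$ and write $e_n=[e_1'',e_{n-1}]=\pm[e_i,e_{n-i}]$. Then Jacobi expresses $[e_1',e_n]$ as $[e_2,e_{n-1}]$ (using $[e_1',e_{n-1}]=0$) and $[e_1'',e_n]$ through brackets $[e_i,e_j]$ with $i+j=n+1$. The same alternating-chain argument, now with $n+1$ even so that the middle term vanishes, forces all of these to zero. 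Hence $\g_{-1}$ cannot generate a nonzero $\g_{-n-1}$.

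I expect this last step to be the main obstacle. The required Jacobi bookkeeping involves the brackets $[e_1'',e_j]$ and $[e_i,e_j]$ in degree $n+1$ simultaneously, and one must check that the sign pattern really closes up to force everything to vanish.
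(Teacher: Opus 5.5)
Your proposal is correct and follows the paper's own route: induction on depth via the quotient by $\g_{-n}$, Jacobi identities with $e_1'$ and $e_1''$, the shift $e_1'\mapsto e_1'+te_1''$ in the odd case, and non-extendability of \eqref{nGou} because $[\g_{-1},e_n]=0$. You carry out for general $n$ what the paper writes out only for $n=5,6$. The last step does close up, though not by the same alternating chain: $[e_1'',e_n]=0$ follows at once from $e_n=-[e_2,e_{n-2}]$, and writing $[e_1',e_n]=x\,e_{n+1}$ the Jacobi identities give $[e_j,e_{n+1-j}]=(-1)^j(j-1)x\,e_{n+1}$, so the vanishing middle bracket $[e_{k+1},e_{k+1}]$ forces $kx=0$.
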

 
 \begin{proof}
Given a GNLA $\m$ of depth $n$, the quotient by its center $\mathfrak{z}(\m)=\g_{-n}$ is a GNLA of depth $n-1$.
Any GNLA with the required growth can be obtained in this way, so we proceed sucessively in $n$.
 
It is easy to see that for $n<5$ there is only one GNLA with the required properties, it is given by relations \eqref{Gou}.
To extend this to $n=5$, take the bracket of $[e_1'',e_3]=0$ with $e_1'$ and get the following relations:
 $$
[e_1',e_4]=a\,e_5,\quad [e_1'',e_4]=-[e_2,e_3]=b\,e_5 
 $$
for some $(a,b)\in\R^2\setminus0$. If $b=0$, then rescale $e_5$ to get $(a,b)=(1,0)$ so the Goursat type \eqref{Gou}$_{n=5}$.
If $b\neq0$ then modify $e_1'$ by $e_1''$ and rescale $e_5$ to get $(a,b)=(0,1)$ so \eqref{nGou}$_{n=5}$.
This corresponds to the grading of $\g=G_2$ with respect to Borel parabolic subalgebra $\mathfrak{p}_{12}$.

Next, consider $n=6$. If $\m$ is an extension of \eqref{Gou}$_{n=5}$, take the bracket of $[e_1'',e_4]=0$ with $e_1'$ 
and get the following relation (the second equality is obtained by bracketing $[e_2,e_3]=0$ with $e_1'$):
 $$
[e_1'',e_5]=-[e_2,e_4]=[e_3,e_3]=0.
 $$
This must be accompanied with $[e_1',e_4]=a\,e_5$ for $a\neq0$, which can be renormalized to $a=1$. This gives 
the GNLA \eqref{Gou}$_{n=6}$.

If $\m$ is an extension of \eqref{nGou}$_{n=5}$, take the bracket of $[e_1'',e_4]=e_5=-[e_2,e_3]$ with $e_1'$ 
and get the following relation:
 $$
[e_1',e_5]=[e_2,e_4]=-[e_2,e_4]=0.
 $$
Similarly, taking the bracket with $e_1''$ we get $[e_1'',e_5]=0$. Thus $e_5$ is central but not in the last grade $\g_{-6}$,
and also $e_6$ is not bracket-generated. This is a contradiction with fundamental property, so cannot happen.

The rest is by induction, and the induction step resembles the steps above for $n=5,6$. Indeed, assuming $n=2k+1$ odd,
the GNLA $\m$ of the previous depth $n-1=2k$ has only one type, and we can extend it by the following relation:
 $$
[e_1',e_{2k}]=a\,e_n,\quad [e_1'',e_{2k}]=-[e_2,e_{2k-1}]=\dots=(-1)^{k-1}[e_k,e_{k+1}]=b\,e_n
 $$
for some $(a,b)\in\R^2\setminus0$. The cases $b=0$ and $b\neq0$ leads to the types \eqref{Gou} and \eqref{nGou}
respectively. On the other hand, if $n=2k+2$ is even, then the GNLA $\m$ of the previous depth $n-1=2k+1$ 
can have two types: type \eqref{Gou} has a unique extension by the Jacobi identity, while type \eqref{nGou} 
is non-extendable with the fundamental property.
 \end{proof}
 
 \begin{prop}\label{P3}
For the algebraic models of Proposition \ref{P2} the GNLA \eqref{Gou} admits only one complex structure $J$ on $\g_{-1}$
up to conjugation, while the GNLA \eqref{nGou} admits a one-parametric family of such structures.
Namely, in the given bases $Je_1'=e_1''+be_1'$, where $b=0$ for \eqref{Gou} while $b$ is arbitrary for \eqref{nGou}.
 \end{prop}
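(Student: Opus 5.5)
The plan is to classify complex structures $J$ on $\g_{-1}$ up to the action of the group $G_0=\exp\hat\g_0$ of grading-preserving automorphisms of $\m$. By \eqref{eqg0}, conjugate structures give equivalent CR symbols, so the first step is to compute $\hat\g_0=\mathfrak{der}_0(\m)\subset\mathfrak{gl}(\g_{-1})$ for each of the two GNLAs of Proposition \ref{P2}. A degree-0 derivation is determined by its restriction $A\in\mathfrak{gl}(2,\R)$ to $\g_{-1}$. Its action on $e_i$, $i>1$, is forced by $e_2=[e_1',e_1'']$ and $e_{i+1}=[e_1',e_i]$. The constraints come from compatibility with the remaining relations.

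For the Goursat algebra \eqref{Gou}, the line $\langle e_1''\rangle$ is distinguished: it is the Cauchy characteristic of $D^2$, i.e.\ the kernel of $\op{ad}$ from $\g_{-1}$ to $\g_{-3}$ on $\g_{-2}$. Since $[e_1'',e_2]=0$ but $[e_1',e_2]=e_3\neq0$, every derivation preserves $\langle e_1''\rangle$. Conversely, for $n\ge4$ I expect every lower-triangular $A$ with $Ae_1'=\alpha e_1'+\beta e_1''$ and $Ae_1''=\delta e_1''$ to extend. One checks that $e_1'\mapsto e_1'+te_1''$ is an automorphism, since $[e_1'',e_i]=0$ for $i\ge2$, and so are the independent scalings $e_1'\mapsto se_1'$, $e_1''\mapsto ue_1''$. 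Hence $G_0$ is the full group $B$ of lower-triangular matrices in the basis $(e_1',e_1'')$.

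A complex structure $J$ has no real eigenlines, so $Je_1''$ is not proportional to $e_1''$. We write $Je_1'=pe_1'+qe_1''$ with $q\ne0$. Using $e_1'\mapsto e_1'+te_1''$ we eliminate the diagonal term, and using the scalings we normalize. Concretely, $B$ acts simply transitively on the set of $J$'s, which is the 2-dimensional space $GL(2)/GL(1,\C)$, as soon as $B\cap GL(1,\C)$ consists only of scalars. This gives the unique normal form $Je_1'=e_1''$, i.e.\ $b=0$.

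For \eqref{nGou}, both $\langle e_1'\rangle$ and $\langle e_1''\rangle$ are invariant. Indeed, $e_1''$ is characterized as above via $[e_1'',e_2]=0$ when $n\ge5$. For $e_1'$, the line $\langle e_1'\rangle$ is the kernel of $\op{ad}\colon\g_{-1}\to\op{Hom}(\g_{-(n-1)},\g_{-n})$, since $[e_1',e_{n-1}]=0$ while $[e_1'',e_{n-1}]=e_n$. The key computation, and the main obstacle, is to show that $G_0$ is exactly the diagonal torus $(\R^\times)^2$. Scalings $e_1'\mapsto se_1'$, $e_1''\mapsto ue_1''$ must be automorphisms: assigning weights $e_i\mapsto s^{i-1}u\,e_i$ (for $i\ge2$), we need the relations $[e_j,e_{n-j}]=\pm e_n$ to be homogeneous. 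Here $[e_j,e_{n-j}]$ has weight $s^{n-2}u^2$ for $2\le j\le n-2$, and $[e_1'',e_{n-1}]$ has weight $u\cdot s^{n-2}u$, which is consistent, so setting $e_n\mapsto s^{n-2}u^2e_n$ works. I would verify this carefully against the sign pattern of \eqref{nGou}.

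With two invariant lines, write $J$ as a matrix in the basis $(e_1',e_1'')$ with trace zero and determinant one. The diagonal torus acts by conjugation, $(p,q,r)\mapsto(p,\,qu/s,\,rs/u)$ for $Je_1'=pe_1'+qe_1''$. It preserves the diagonal entry $p$ and rescales the off-diagonal entries subject to $qr$ fixed (the swap $J\mapsto -J$ being also allowed). Normalizing $q=1$ gives $Je_1'=e_1''+be_1'$, with $b$ an invariant up to the sign ambiguity. This yields the claimed one-parameter family, and shows that $b=0$ is forced only in the Goursat case.
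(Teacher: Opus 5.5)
Your proposal is correct and follows the paper's proof: you identify $\hat\g_0=\mathfrak{der}_0(\m)$ as the Borel subalgebra for \eqref{Gou} and as the diagonal Cartan subalgebra for \eqref{nGou}, then normalize $Je_1'=ae_1''+be_1'$ by the corresponding group, supplying the explicit invariant-line and scaling-automorphism checks that the paper leaves implicit. One small wording slip: $B$ is 3-dimensional, so on the 2-dimensional space of $J$'s it acts transitively with stabilizer the scalars, not simply transitively; your explicit normalization is unaffected.
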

 
 \begin{proof}
The complex structure for \eqref{Gou} in the indicated basis is uniquely given by the relation $Je_1'=ae_1''+be_1'$,
where $a\neq0$. The algebra $\hat\g_0=\mathfrak{der}_0(\m)$ that is the 0-prolongation of the symbol of the distribution
is the Borel subalgebra $\mathfrak{b}\subset\mathfrak{gl}(2,\R)$. The corresponding group $B$ can transform the parameters $(a,b)\mapsto(1,0)$. 

In case \eqref{nGou} we have the same formula for $J$ via $a,b$, but now $\hat\g_0=\mathfrak{der}_0(\m)$ 
is two-dimensional Abelian $\R\oplus\R$: Cartan subalgebra $\mathfrak{h}\subset\mathfrak{gl}(2,\R)$.
The corresponding group rescales along the distinguished axes $e_1',e_1''$, and we can eliminate one parameter
$(a,b)\mapsto(1,b)$. However then the parameter $b$ is non-removable.
 \end{proof}

 \begin{proof}[Proof of Theorem \ref{Th2}]
A model of CR-structure is a coordinate representation of a surface in the complex space that is homogeneous 
with respect to some weights of coordinates. This is equivalent to the existence of a scaling field 
$X=\sum\lambda_kz_k\p_{z_k}$ ($\lambda_k\in\mathbb{N}$ are weights) that is a symmetry of the surface. 
Together with homogenuity and fixed Bloom-Graham type $(2,3,\dots,n+1)$ this means the projective tuple
$[\lambda_1:\dots:\lambda_n]$ is equivalent to $[1:2:\dots:n]$ that gives weight $k$ to $z_k$ ($1\leq k\leq n$).

In algebraic language, a model corresponds to $\m$ with grading element $Z$ included into $\g_0$.
If it is homogeneous, then $\m(x)$ are isomorpohic at all points and the model may be identified with $M=\exp(\m)$.
Thus, classification of homogeneous models is reduced to classification of fundamental GNLA with a complex structure $J$ 
on $\g_{-1}$, so the result follows from Propositions \ref{P2} and \ref{P3}.
 \end{proof}
 
 \begin{proof}[Proof of Corollary \ref{Cor2}]
Rigid real submanifolds $\op{Re}(z_k)=f_k(z_1,\bar{z}_1)$, $k=2,\dots,n$, of CR dimension 1 in $\C^n$ have an Abelian 
subalgebra of symmetries $\langle\p_{\op{Im}(z_k)}:1<k\leq n\rangle$ of dimension $n-1$. 
However the maximal Abelian subalgebra
of GNLA \eqref{nGou} together with its extension by the grading element $Z$ has dimension $k+1=m/2=[(n+1)/2]$.
This is less than $n-1$ for $n\ge4$, so the alternative models \eqref{nGou} are not rigid. 
However for $n=2,3$ the models are unique by Propositions \ref{P2} and \ref{P3}. 
Thus only the standed models \eqref{Gou} remain, which are indeed rigid.
 \end{proof}


\section{General rank 2 distributions: extensions, reductions and prolongations}\label{S5}

Let us start by recalling \cite{AK} the procedure of extending GNLAs. 
Note on terminology: extention of a GNLA of depth $\nu$ is a GNLA of larger depth. 
Thus an extension is increasing the algebra in the negative direction, while prolongation is 
doing that in the positive direction. Let us consider one step extension:
 \begin{equation}\label{mmm}
\g_{-\nu-1}\oplus\underbrace{\g_{-\nu}\oplus\dots\oplus\g_{-1}}_{\m}=\tilde{\m}.
 \end{equation}
Equivalently we represent this as an exact sequence
 \begin{equation}\label{three}
0\to\g_{-\nu-1}\to\tilde{\m}\to\m\to0,
 \end{equation}
which tells that $\tilde{\m}$ is a central extension of $\m$ by $\g_{-\nu-1}$. Such extensions are encoded
by the homogeneous component $H^2(\m)_{\nu+1}$ of the second Chevalley-Eilenberg cohomology of Lie algebra $\m$ 
with trivial coefficients.
On the other hand, the graded component of the bracket decomposes as follows. Notice that the grading of the dual space is
negative of that for the original component, thus $\m_{-1}^*:=(\m_{-1})^*=(\m^*)_1$ etc. We get
 $$
(\Lambda^2\m^*)_{\nu+1}=(\m^*_{-1}\wedge\m^*_{-\nu})\oplus(\m^*_{-2}\wedge\m^*_{-\nu+1})\oplus\dots
 $$
Since $\m_{-1}$ bracket-generates $\m$, the left-hand side is fully determined by the first summand 
$\m^*_{-1}\wedge\m^*_{-\nu}$. Thus, since $H^2=Z^2/B^2$ and $\m$-coboundaries $B^2$ are trivial in grading $\nu+1$,
the cohomology is given by cocycles $Z^2$ and we conclude
 $$
H^2(\m)_{\nu+1}=Z^2(\m)_{\nu+1}\subset\m^*_{-1}\wedge\m^*_{-\nu}.
 $$
The group $G_0=\op{Aut}_0(\m)$ of grading preserving automorphisms of $\m$ (with $\op{Lie}(G_0)=\g_0$) acts 
on $H^2(\m)_{\nu+1}$ and extensions of dimension $d=d_{\nu+1}$ are bijective with $G_0$-orbits of 
$\op{Gr}_dH^2(\m)_{\nu+1}$.

For instance, for the unique 5-dimensional GNLA $\m_{HC}=\g_{-3}\oplus\g_{-2}\oplus\g_{-1}$ of depth 3 with two generators,
$H^2(\m_{HC})_3$ is $G_0$-equivariantly isomorphic to the Minkowski conformal 3-space 
$\R^{1,2}=\op{ad}(\mathfrak{sl}_2)\simeq S^2\R^2$ whence three $G_0$-orbits in $\mathbb{P}H^2(\m_{HC})_3$. 
Following \cite{AK} these are called hyperbolic $\mathfrak{h}_6$, elliptic $\mathfrak{ell}_6$ and parabolic $\mathfrak{p}_6$. 
In this way we get extensions of $\m_{HC}$ with $d=1,2,3$ and vector growth: 
three cases $(2,1,2,1)$, three cases $(2,1,2,2)$ and unique $(2,1,2,3)$.
Then one makes next extension etc.

Note that $\g_{-1}$ is unchanged and given a complex structure $J$ on it, the extension inherits it. This algebraic extension
has a geometric counter-part, called integrable extension in \cite{AK}. From CR triple $(M,D,J)$
one can construct $(\tilde{M},\tilde{D},\tilde{J})$ with the corresponding GNLAs $\m,\tilde{\m}$: consider a principal bundle
$\tilde{M}\to M$ with Abelian fiber $H=\exp\g_{-\nu-1}$, choose a principal connection and lift the CR structure $(D,J)$ 
to $(\tilde{D},\tilde{J})$ using this connection. Performed in a symmetric manner, this ihnerits a homogeneous structure.

The opposite operation for an extension is a symmetry reduction. On the algebraic level, it is given by the projection 
$\tilde{\m}\to\m$ of the sequence \eqref{three}. For the corresponding nilpotent Lie groups $\tilde{M}\to M$
is the submersion of the distributions preserving the complex structures $J$ and it is given by the symmetry 
reduction along $\mathfrak{h}=\g_{-\nu-1}$. More generally, on geometric level, 
if $\mathfrak{h}$ is an Abelian algebra of symmetries transversal to $\tilde{D}^{\nu}$ then on the (local) leaf space $M$
there is an induced CR structure $(D,J)$, of which $(\tilde{D},\tilde{J})$ is an integrable extension. We summarize this as follows:
 $$
(\tilde{M},\tilde{D},\tilde{J})\autoleftrightharpoons{extension}{reduction}(M,D,J)
 $$

Next we discuss two different prolongations. One is algebraic prolongation of Tanaka: $\g=\op{pr}(\m)$, which in the
case of CR structure is specified to $\g=\op{pr}(\m,\g_0)$ with $\g_0=\mathfrak{der}_0(\m)\cap\mathfrak{gl}(\g_{-1},J)$.
By the prolongation rigidity, for CR structures of CR dimension 1, $\op{pr}(\m,\g_0)=\m\rtimes\g_0$.
Related to this notion is the geometric prolongation \cite{T} of nonholonomic geometric stuctures, associating 
a frame bundle $\mathcal{F}\to M$ encoding the structure. Again, for our CR structures, $\mathcal{F}$ is a $G_0$
bundle over $M$ with a Cartan conneciton, namely (we denote by $r_g$ the right action of the structure group and by
$\zeta_v$ the corresponding fundamental field)
 \begin{equation*}\label{Cartanconnection}
\begin{tikzpicture}
\node at (0.3,1.0) {$\mathcal{F}$}; \node at (0.3,0) {$M$}; \node at (0,0.5) {\small{$G_0$}}; 
\draw[->] (0.3,0.8) -- (0.3,0.2); \node at (3,0.6) {$\omega:T\mathcal{F}\stackrel{\sim}\to\mathcal{F}\times\g$}; 
\node at (7.3,1) {$r_g^*\omega=\op{Ad}_g^{-1}\omega\ \forall g\in G_0$}; 
\node at (6.9,0.2) {$\omega(\zeta_v)=v\ \forall v\in\g_0$}; 
 \end{tikzpicture}
 \end{equation*}
 
Another notion is the prolongation of a rank 2 distribution $D$ on $M$ (this is related to prolongation
of differential equations encoding $D$, see \cite{AK} and Section \ref{S7}). 
Given a rank 2 distribution $D$ on a manifold $M$ one produces another 
rank 2 distribution on a manifold of one dimension larger. Namely, $\hat{M}=\mathbb{P}D$ with the natural projection 
$\pi:\hat{M}\to M$, and for $\ell\in\mathbb{P}D(x)$, $\pi(\ell)=x\in M$, 
we get the rank 2 distribution $\hat{D}=d\pi^{-1}(\ell)$.
For example, if we start with the holonomic rank 2 distribution $T\R^2$ on $\R^2$, then its first prolongation is 
the contact 2-distribution on $J^1(\R)\subset \mathbb{P}T\R^2$, then the Engel distribution on $J^2(\R)$, etc.

The construction is functorial and $\op{Sym}(\hat{M},\hat{D})=\op{Sym}(M,D)$. The inverse construction, called
de-prolongation, is as follows. The square $\hat{D}^2$ is the rank 3 distribution $d\pi^{-1}(D(x))$ 
with Cauchy characteristic (tangent symmetry)
$d\pi^{-1}(0)\subset\hat{D}$. The quotient by it maps $(\hat{M},\hat{D}^2)$ to $(M,D)$.
We summarize this as follows:
 $$
(\tilde{M},\tilde{D})\autoleftrightharpoons{prolongation}{deprolongation}(M,D)
 $$

Due to the existence of a Cauchy characteristic of $\hat{D}^2$, the growth vector of the prolonged distribution $\hat{D}$ is
$(2,1,1,\dots)$ at regular points and this indicates a possibility of (local) de-prolongation.
Thus, every (regular) rank 2 distribution, which allows maximal number of de-prolongations, 
is Goursat and thus the prolongation of the tangent distribution $T\R^2$. 
In the opposite case, if $\Delta$ with GNLA $\m$ is non-equivalent to the Goursat distribution then its 
(successive) de-prolongation $\bar\Delta$ with GNLA $\bar\m$ has growth vector $(2,1,2,\dots)$, so 
$\bar\m$ is an extension of $\m_{HC}$. Thus a regular rank 2 distribution $\Delta$ is either Goursat or is
a prolongation of an integrable extension of distribution $\bar\Delta$ with growth vector $(2,1,2)$.

 \begin{rk}\label{Rk1}
By the very definition of the prolongation of $D$ if $\g_0=\op{pr}_0(\m)$ does not act transitively on $\mathbb{P}D$
then $\hat{M}$ is not strongly regular. For instance, if $\g_0$ stabilizes $\ell\subset D(x)$ then the corresponding point
$\hat{x}=(x,\ell)$ is singular in $\hat{M}$ in the sense that the GNLA $\hat\m$ changes its type near $\hat{x}$.
In particular, the symmetry algebra, even if acting transitively on $M$, does not allow a transitive lift to $\hat{M}$.
For CR structure of CR-$\dim=1$ the structure algebra $\g_0=\op{pr}_0(\m)$ acts transitively on $\mathbb{P}D$
iff $r=2$ in the terminology of Theorem \ref{Th1}. Such structures will be described in the next section.
 \end{rk}

One should note that if $\hat{D}$ is the prolongation of a rank 2 dsitribution $D$ then it does not carry a natural complex
structure $\hat{J}$ even if we start with CR structure $(D,J)$. (In fact, $J$ induces a $\Z_2$ action on the fiber 
$\R\mathbb{P}^1\simeq\mathbb{S}^1$ of $\pi:\tilde{M}\to M$ if $\mathbb{P}\Delta$ is understood as line projectivization; 
however this recovers $J$ up to sign; if instead we consider ray projectivization $\mathbb{P}_+\Delta$ then $J$ is 
completely encoded into $\Z_4$ action on the fibers $\mathbb{S}^1$ of $\pi_+:\tilde{M}_+\to M$.) 
Conversely, given CR structure $(\hat{D},\hat{J})$ its deprolongation acts only on distribution $D$ but $\hat{J}$ does not descend. 
Thus the prolongation and de-prolongation of distributions do not have direct impact on CR structure, but help
classifying the underlying rank 2 distribuitions.

\section{CR structures of CR dimension 1 with maximal symmetry dimension}\label{S6}

By Theorem \ref{Th1}, if $m>3$ then $\dim\op{Sym}(M,D,J)\leq m+r$, $r\leq2$. This bound is realized 
by the algebraic model $\exp\m$, for which $\g_0$ always contains the grading element $Z$, so $r=\dim\g_0\ge1$. 

If $r=1$ then $\g_0=\R(Z)$. In this section we characterize the maximal dimension $r=2$, in which case 
$\g_0=\mathfrak{gl}(1,\C)\simeq\mathfrak{co}(2)$ as it should preserve $J$ on $\g_{-1}\simeq\R^2$.
The maximal subalgebra $\g_0$ in 0th prolongation of GNLA $\m$ (without $J$) is $\mathfrak{gl}(2,\R)$.

 \begin{lemma}
The only subalgebras of $\mathfrak{gl}(2,\R)$ containing $\mathfrak{co}(2)$ are $\mathfrak{co}(2)$ and
$\mathfrak{gl}(2,\R)$ itself.
 \end{lemma}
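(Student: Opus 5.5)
Since $\mathfrak{co}(2)=\langle \op{id},J\rangle$ contains the center $\R\cdot\op{id}$ of $\mathfrak{gl}(2,\R)$, the plan is to split off that center and reduce the statement to one about subalgebras of $\mathfrak{sl}(2,\R)$ containing $\mathfrak{so}(2)=\R J$. Let $\mathfrak{a}$ be a subalgebra with $\mathfrak{co}(2)\subset\mathfrak{a}\subset\mathfrak{gl}(2,\R)$. Because $\op{id}\in\mathfrak{a}$ and $\mathfrak{gl}(2,\R)=\R\op{id}\oplus\mathfrak{sl}(2,\R)$, we get $\mathfrak{a}=\R\op{id}\oplus\mathfrak{a}'$ with $\mathfrak{a}'=\mathfrak{a}\cap\mathfrak{sl}(2,\R)$. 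Moreover $\mathfrak{a}'$ is a subalgebra containing $\R J$. It therefore suffices to show that $\mathfrak{a}'$ is either $\R J$ or $\mathfrak{sl}(2,\R)$.

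The key step is a dimension count. Suppose $\mathfrak{a}'\ne\R J$. If $\dim\mathfrak{a}'=3$ we are done, so assume $\dim\mathfrak{a}'=2$. Any two-dimensional subalgebra of $\mathfrak{sl}(2,\R)$ is solvable, so its derived algebra has dimension at most $1$. I would show that no such subalgebra contains $J$, by using the $\op{ad}_J$-module structure. Under $\op{ad}_J$ we have $\mathfrak{sl}(2,\R)=\R J\oplus\mathfrak{p}$, where $\mathfrak{p}$ is the traceless symmetric matrices. On $\mathfrak{p}$, $\op{ad}_J$ acts as a rotation with eigenvalues $\pm2i$, and so has no real invariant line. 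Now $\mathfrak{a}'$ is $\op{ad}_J$-invariant and contains $\R J$. Hence $\mathfrak{a}'\cap\mathfrak{p}$ is an $\op{ad}_J$-invariant subspace of $\mathfrak{p}$, and it is nonzero by dimension, because $\mathfrak{a}'=\R J\oplus(\mathfrak{a}'\cap\mathfrak{p})$. An invariant subspace of $\mathfrak{p}$ with no invariant line must be all of $\mathfrak{p}$, which forces $\mathfrak{a}'=\mathfrak{sl}(2,\R)$. This contradicts $\dim\mathfrak{a}'=2$.

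The only point to check carefully is the eigenvalue computation. Take $J=\begin{pmatrix}0&-1\\1&0\end{pmatrix}$ with $H=\op{diag}(1,-1)$ and $S=\begin{pmatrix}0&1\\1&0\end{pmatrix}$. Then $[J,H]$ and $[J,S]$ are $\pm2S$ and $\mp2H$, so $\op{ad}_J^2=-4$ on $\mathfrak{p}=\langle H,S\rangle$. This is routine, and I expect no real obstacle. Translating back, $\mathfrak{a}$ equals either $\mathfrak{co}(2)$ or $\mathfrak{gl}(2,\R)$.
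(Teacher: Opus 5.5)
Your proof is correct, but it takes a different route from the paper.

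\textbf{The paper's route.} The paper works through the subalgebra lattice of $\mathfrak{gl}(2,\R)$. It lists the maximal proper subalgebras: the 3-dimensional Borel $\mathfrak{b}$, $\mathfrak{sl}(2,\R)$ and $\mathfrak{co}(2)$. It then notes two facts:
\begin{itemize}
\item $\mathfrak{b}$ preserves a line, so it meets $\mathfrak{co}(2)$ only in $\R(Z)$;
\item $\mathfrak{sl}(2,\R)\cap\mathfrak{co}(2)=\mathfrak{so}(2)$.
\end{itemize}
Finally it checks a list of 2-dimensional subalgebras (Cartan, Borel-type, M\"obius) to see that only the M\"obius one preserves $J$.

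\textbf{Your route.} You bypass any classification. $\op{ad}_J$ annihilates $\mathfrak{co}(2)$ and acts on the complementary plane $\mathfrak{p}$ with $\op{ad}_J^2=-4$. So $\mathfrak{gl}(2,\R)/\mathfrak{co}(2)$ is an irreducible $\mathfrak{so}(2)$-module, and nothing lies strictly between the two algebras.

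\textbf{What each buys.} Your argument is self-contained and proves slightly more. Every $\op{ad}_J$-invariant subspace containing $\mathfrak{co}(2)$ is one of the two ends, so closure under brackets other than with $J$ is never used. Your remark on solvability of 2-dimensional subalgebras can therefore be dropped. It also does not depend on the completeness of the paper's list of 2-dimensional subalgebras. That list in fact omits the non-Abelian ones, such as the Borel subalgebra of $\mathfrak{sl}(2,\R)$, though this is harmless there. The paper's route gives extra information used right afterwards when identifying $\g_0$: which subalgebras of $\mathfrak{gl}(2,\R)$ can preserve some $J$ at all (e.g.\ only $\R(Z)$ inside $\mathfrak{b}$).

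\textbf{One small point.} Describing $\mathfrak{p}$ as the traceless symmetric matrices presupposes that $J$ is the standard rotation. Either say this is arranged by conjugation in $GL(2,\R)$, or define $\mathfrak{p}=\{X: XJ=-JX\}$ invariantly. On that space $\op{ad}_J X=2JX$, and hence $\op{ad}_J^2=-4$.
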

 
 \begin{proof}
Maximal proper subalgebras of $\mathfrak{gl}(2,\R)$ are: 3D Borel $\mathfrak{b}$, $\mathfrak{sl}(2,\R)$ and $\mathfrak{co}(2)$. 
Since $\mathfrak{b}$ preserves a line in $\g_{-1}\simeq\R^2$, its only subalgebra preserving $J$ is $\R(Z)$.
We also have: $\mathfrak{sl}(2,\R)\cap\mathfrak{co}(2)=\mathfrak{so}(2)$; this holds for any copy of $\mathfrak{co}(2)$
inside $\mathfrak{gl}(2,\R)$ up to conjugation. The subalgebras of $\mathfrak{gl}(2,\R)$ of dimension 2 are: 
 $$
\text{ Cartan: } \begin{bmatrix} a & 0 \\ 0 & b\end{bmatrix}\qquad
\text{ Borel: } \begin{bmatrix} a & b \\ 0 & a\end{bmatrix}\qquad
\text{ M\"obius: } \begin{bmatrix} a & b \\ -b & a\end{bmatrix}
 $$
Of those only the latter algebra preserves a complex structure $J$.
 \end{proof}

Now we aim to classify GNLA $\m$ with $\g_0$ of one of the above two types.

 \begin{lemma}
If $\g_0\subset\op{pr}_0(\m)$ is either $\mathfrak{gl}(2,\R)$ or $\mathfrak{co}(2)$ and $\nu>3$ then $(\m,\g_0)$ is prolongation rigid.
 \end{lemma}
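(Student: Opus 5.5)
The plan is to treat the two possibilities for $\g_0$ separately. The case $\g_0=\mathfrak{co}(2)$ needs no new work: it is the case $r=2$ of Proposition \ref{P1}, whose hypothesis $\nu>2$ is weaker than $\nu>3$, so $\op{pr}_+(\m,\g_0)=0$ there. The real content is the case $\g_0=\mathfrak{gl}(2,\R)$. I would run the same Lie-theoretic argument as in the proof of Proposition \ref{P1}, based on \cite[Propositions 2.4, 2.5]{Y}.

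\emph{Finite type.} First I check that $(\m,\mathfrak{gl}(2,\R))$ has finite type. The only rank 2 symbol of infinite type is the Goursat one. For depth $\nu>2$ it has $\hat\g_0=\mathfrak{b}$, which is the Borel subalgebra, as noted in the proof of Proposition \ref{P3}. Since $\mathfrak{b}$ does not contain $\mathfrak{gl}(2,\R)$, the assumption $\g_0=\mathfrak{gl}(2,\R)$ with $\nu>3$ excludes the Goursat symbol. Hence $\g=\op{pr}(\m,\g_0)$ is finite-dimensional.

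\emph{Structure of $\g$ when $\g_1\neq0$.} Suppose, to the contrary, that $\g_1\neq0$.
\begin{itemize}
\item By \cite[Proposition 2.4]{Y}, $\g_1$ is a sum of duals of $\g_0$-irreducible summands of $\g_{-1}$. Here $\mathfrak{gl}(2,\R)$ acts irreducibly on $\g_{-1}=\R^2$, so $\g_1=\g_{-1}^*$ is forced.
\item By \cite[Proposition 2.5]{Y}, the semisimple part $\g^{ss}$ contains $\g_+$. It is generated by $\g_{-1}'\oplus\g_0'\oplus\g_1$, where now $\g_{-1}'=\g_{-1}$ is the whole space.
\item Since $\g_{-1}$ bracket-generates $\m$, we get $\m\subset\g^{ss}$. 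Thus $\g^{ss}$ is a graded semisimple algebra with $\g^{ss}_-=\m$.
\item Its degree-zero part is contained in $\mathfrak{gl}(2,\R)$, so it has rank $\le 2$. It also contains $[\g_{-1},\g_1]$, which must be large enough to act irreducibly on $\g_{-1}$.
\end{itemize}

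\emph{Classification.} This places us among the gradings listed at the end of Section \ref{S2}: graded semisimple algebras of rank $\le2$ whose $\g_{-1}$ is 2-dimensional and bracket-generating.
\begin{itemize}
\item For $A_1\oplus A_1$, $A_2/P_{12}$, $B_2/P_{12}$ and $G_2/P_{12}$ the degree-zero part is a Cartan subalgebra. It preserves the two root lines in $\g_{-1}$, so it cannot contain $\mathfrak{sl}(2,\R)$ acting irreducibly; these are excluded.
\item The remaining gradings have one cross, so $\g_0^{ss}=\mathfrak{sl}(2,\R)$ acts irreducibly on $\g_{-1}$. They are $A_2/P_1$ (depth $1$, holonomic, excluded), $C_2/P_1=B_2/P_2$ (depth $2$, $\m=\mathfrak{heis}(3)$) and $G_2/P_1$ (depth $3$, $\m=\m_{HC}$).
\end{itemize}
In every surviving case $\nu\le3$, which contradicts $\nu>3$. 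Therefore $\g_1=0$, and so $\op{pr}_+(\m,\g_0)=0$.

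\emph{Main obstacle.} The delicate step is justifying that $\g^{ss}$ is simple, or more generally that the only candidates are the rank $\le2$ graded semisimple algebras listed above. For the non-simple case I would argue that an ideal decomposition of $\g^{ss}$ would split $\g_{-1}$ into $\g_0$-invariant pieces, which is impossible since $\mathfrak{gl}(2,\R)$ acts irreducibly. I would also check that $\g_0\cap\g^{ss}$ has rank $\le 2$, because it sits inside $\mathfrak{gl}(2,\R)$. This gives the rank bound used to reduce to the list in Section \ref{S2}.
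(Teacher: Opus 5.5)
Your proposal is correct and follows essentially the paper's route. The $\mathfrak{co}(2)$ case is handed to Proposition~\ref{P1}, and for $\g_0=\mathfrak{gl}(2,\R)$ you combine finite type with Yamaguchi's structure theory to get a graded semisimple algebra with negative part $\m$, whose rank-2 classification leaves only $A_2/P_1$, $C_2/P_1$, $G_2/P_1$ with $\nu\le3$. The only difference is that the paper quotes \cite[Corollary 2.5]{Y} directly (either $\op{pr}(\m)$ is semisimple or $\op{pr}_+(\m)=0$), whereas you rederive this dichotomy from Propositions 2.4--2.5 of \cite{Y} as in the proof of Proposition~\ref{P1}, and you also make explicit the finite-type step by noting that the Goursat symbol has $\hat\g_0=\mathfrak{b}$.
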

 
 \begin{proof}
The statement $\op{pr}(\m,\g_0)=\m\ltimes\g_0$ in the case $\g_0=\mathfrak{co}(2)$ was proved in Proposition \ref{P1}. 
Let $\g_0=\hat\g_0=\mathfrak{gl}(2,\R)$. Then $\m$ is of finite type and hence by \cite[Corollary 2.5]{Y} we have an alternative:
either $\g=\op{pr}(\m)$ is semi-simple or $\op{pr}_+(\m)=0$. In the first case $\g$ is actually a simple Lie algebra of rank 2 
with one of the following gradations: $A_2/P_1$ with $\m=\R^2$ ($\nu=1$),
$C_2/P_1$ with $\m=\mathfrak{heis}(3)$ ($\nu=2$) or $G_2/P_1$ with $\m=\m_{HC}$ ($\nu=3$).
Thus we conclude prolongation rigidity.
 \end{proof}

{\bf Case 1: $\g_0=\mathfrak{gl}(2,\R)$.} 
Each graded component $\g_{-i}$ of $\m$ is a $\g_0$ representation. Since the center $Z$ acts by scalar $-i$,
we describe $\g_{-i}$ as the direct sum of one or several $\mathfrak{sl}(2,\R)$ irreducible representations $\Gamma_t=S^t\R^2$ 
of $\dim=t+1$. The largest such $\m$ is the free GNLA (of $\nu=\infty$) and for it
 \begin{equation*}\label{free2}
\g_{-1}=\Gamma_1,\  \g_{-2}=\Gamma_0,\ \g_{-3}=\Gamma_1,\ \g_{-4}=\Gamma_2,\ \g_{-5}=\Gamma_1+\Gamma_3,\
\g_{-6}=\Gamma_0+\Gamma_2+\Gamma_4,\ \g_{-7}=2\Gamma_1+2\Gamma_3+\Gamma_5,\ \dots
 \end{equation*}
These equalities are obtained from the inclusion $\g_{-i-1}\subset\g_{-1}\otimes\g_{-i}$ and the Jacobi identity.
The growth vector of this free GNLA is derived from Hall bases and the M\"obius inversion formula \cite{S}:
 $$
2, 1, 2, 3, 6, 9, 18, 30, 56, 99, 186, 335, 630, 1161, 2182, 4080, 7710, 14532, 27594, 52377, \dots 
 $$
Finite depth free GNLA is obtained from infinite free GNLA by truncation after the grading $-\nu$.

Any other GNLA with $\g_0=\op{pr}_0(\m)=\mathfrak{gl}(2,\R)$ is obtained from the free GNLA as a quotient by a graded ideal.
In terms of components, this means a reduction of one or several modules $\Gamma_t$ in $\g_{-i}$, 
compatible with bracket generation and the Jacobi identity. In \cite{K3} we call such GNLA sub-free Lie algebras.

For example, $\g_{-5}$ is the direct sum of two proper submodules $\Gamma_1$ and $\Gamma_3$.
These can be taken as reductions $\g_{-5}'$ and $\g_{-5}''$, respectively, giving two sub-free GNLA of depth 5: 
 $$
\m'_5= \g_{-5}'\oplus\g_{-4}\oplus\g_{-3}\oplus\g_{-2}\oplus\g_{-1}\quad\text{and}\quad
\m''_5= \g_{-5}''\oplus\g_{-4}\oplus\g_{-3}\oplus\g_{-2}\oplus\g_{-1}.
 $$
These have growth $(2,1,2,3,2)$ and $(2,1,2,3,4)$, respectively. For instance, $\m'_5$ is given by the structure relations
(where $e^j_i\in\g_{-i}$)
 $$
[e'_1,e''_1]=e_2,\ [e'_1,e_2]=e'_3,\ [e''_1,e_2]=e''_3,\ [e'_1,e'_3]=e'_4,\ 
[e'_1,e''_3]=[e''_1,e'_3]=e''_4,\ [e''_1,e''_3]=e'''_4.
 $$Choosing one of those sub-free GNLA or the free truncated GNLA of depth 5 we consider a submodule of $\g_{-6}$ 
that is bracket generated by those, possibly reduce some of irreps, 
 $$
\m'_6= \g_{-6}'\oplus\m'_5\quad\text{and}\quad\m''_6= \g_{-6}''\oplus\m'_5,
 $$
where $\g_{-6}'\subset\Gamma_0+\Gamma_2$ and $\g_{-6}''\subset\Gamma_2+\Gamma_4$, etc.

\medskip

{\bf Case 2: $\g_0=\mathfrak{gl}(1,\C)$.} 
If the growth vector starts $(2,1,1,\dots)$ then $\g_0=\op{pr}_0(\m)\cap\mathfrak{gl}(\g_{-1},J)$ is $\R(Z)$.
Thus we consider sub-free GNLA, with the growth vector  $(2,1,2,\dots)$,
perform a reduction to $\mathfrak{gl}(1,\C)\subset\mathfrak{gl}(2,\R)$ given by $J$ on $\g_{-1}$, 
and then continue graded central extensions keeping $\mathfrak{co}(2)$ invariance.

Technically this means that extensions are encoded via $\mathfrak{co}(2)$ orbits in $\op{Gr}_d(H^2(\m)_{\nu+1})$.
Here are the first low-dimensional cases: rank 2 distributions with $\g_0=\mathfrak{gl}(1,\C)$,
examples are elaborated upon \cite[\S5.3]{AK}. 

We start with 5D growth $(2,1,2)$ and compute successive extensions.
The structure relations of $\m_{HC}$ are:
 \begin{equation}\label{HC}
[e'_1,e''_1]=e_2,\ [e'_1,e_2]=e'_3,\ [e''_1,e_2]=e''_3.
 \end{equation}

6D: $(2,1,2,1)$. This $\mathfrak{ell}_6$ is an extension of $\m_{HC}$.
The additional relations to \eqref{HC} are
 \begin{equation}\label{ell6}
[e'_1,e'_3]=e_4,\ [e''_1,e''_3]=e_4.
 \end{equation}

7D: $(2,1,2,2)$. This $\mathfrak{ell}_7$ is an extension of $\m_{HC}$.
The additional relations to \eqref{HC} are
 \begin{equation}\label{ell7}
[e'_1,e'_3]=-[e''_1,e''_3]=e'_4,\ [e'_1,e''_3]=[e''_1,e'_3]=e''_4.
 \end{equation}

8D: $(2,1,2,1,2)$. This $\mathfrak{ell}_8$ is an extension of $\mathfrak{ell}_6$.
The additional relations to \eqref{HC}-\eqref{ell6} are
 \begin{equation}\label{ell8}
[e'_1,e_4]=[e_2,e''_3]=e'_5,\ [e''_1,e_4]=-[e_2,e'_3]=e''_5.
 \end{equation}
In all the formulae $e_i^j$ is a basis in $\g_{-i}$, and one can extend indefinitely. We summarize:

 \begin{cor}
Every CR structure $(D,J)$ of CR dimension 1 on a manifold $M$ of dimension $m\ge5$ with $\dim(M,D,J)=m+2$
is locally isomorphic to $\exp\m$. The growth vector of the distribution starts $(2,1,2,\dots)$ and $\m$ is either
a sub-free GNLA or a reduction of it with $\g_0=\mathfrak{gl}(1,\C)$ and further sequence of 
$\mathfrak{co}(2)$-invariant graded central extensions. Up to conjugation, the structure $J$ is unique when 
$\dim\g_0=4$ and depends on one parameter when $\dim\g_0=2$.
 \end{cor}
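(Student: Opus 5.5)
The plan is to read the statement backwards from the symmetry dimension to the symbol. Suppose $\dim\op{Sym}(M,D,J)=m+2$. By the bound \eqref{dims} and Proposition \ref{P1}, the Tanaka algebra at each point $x$ satisfies $\dim\g(x)=\dim\m(x)+\dim\g_0(x)\le m+2$. Equality forces $\dim\g_0(x)=2$, i.e.\ $\g_0(x)=\mathfrak{gl}(1,\C)$, and it must hold at every point, since the infimum in \eqref{dims} is then attained everywhere.

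Next we show local homogeneity and flatness, following the standard argument that maximal symmetry forces equality with the model. The symmetry algebra $\mathfrak{s}$ has dimension equal to $\dim\g$, where $\g=\m\rtimes\g_0$. For a Tanaka structure, the map $\mathfrak{s}\to\g$ obtained by taking the filtered jet of a symmetry at $x$ is injective. Here $\op{pr}_+=0$ and the frame bundle $\mathcal F$ of Corollary \ref{Cor1} has dimension $m+2$, so $\mathfrak{s}$ acts on $\mathcal F$ freely, and by the dimension count locally transitively. Hence all curvature components of the normal Cartan connection are constants that are $G_0$-invariant. To conclude they vanish, we use the grading element $Z\in\g_0$. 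The Cartan curvature has strictly positive homogeneity (the symbol is assumed constant and equal to $\m$), and $Z$ acts on positive-degree components by nonzero scalars. An $\op{Ad}(\exp tZ)$-invariant constant must therefore be zero. Flatness gives a local isomorphism with the model $G/H=\exp\m$, with $H=\exp\g_0$. We expect this flatness step to be the main obstacle, since it requires the invariance argument to be run carefully, including that the symbol is constant (which follows from the transitivity just established).

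It remains to describe $\m$. Since $\g_0=\mathfrak{gl}(1,\C)\subset\mathfrak{der}_0(\m)$, the full $\hat\g_0=\mathfrak{der}_0(\m)$ is a subalgebra of $\mathfrak{gl}(2,\R)$ containing $\mathfrak{co}(2)$. By the first Lemma of this section it is $\mathfrak{co}(2)$ or $\mathfrak{gl}(2,\R)$.
\begin{itemize}
\item If the reduced growth started $(2,1,1,\dots)$, then $\g_{-1}$ would contain a distinguished line $\ker\op{ad}|_{\g_{-1}}\colon\g_{-1}\to\op{Hom}(\g_{-2},\g_{-3})$. This would force $\hat\g_0$ into the Borel subalgebra and hence $\g_0=\R(Z)$, as in Case 2 and Proposition \ref{P3}. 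So the growth starts $(2,1,2,\dots)$; this needs $m\ge5$, since for $m=4$ the growth is $(2,1,1)$.
\item If $\hat\g_0=\mathfrak{gl}(2,\R)$, then $\m$ is a quotient of the free GNLA by a $\mathfrak{gl}(2,\R)$-invariant graded ideal, i.e.\ sub-free as in Case 1.
\item Otherwise, let $\nu_0$ be the largest $\nu'$ such that the truncation $\m/\m_{\le-\nu'-1}$ has $\mathfrak{gl}(2,\R)$ symmetry. Then $\m$ is obtained from this sub-free truncation by successive central extensions, each encoded by a $G_0$-orbit in $\op{Gr}_d H^2(\m)_{\nu+1}$ as in Section \ref{S5}, and each required to be $\mathfrak{co}(2)$-invariant because $J$ is preserved.
\end{itemize}

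Finally we count the parameters of $J$. The complex structures on $\g_{-1}$ form $GL(2,\R)/GL(1,\C)$, a $2$-dimensional space. When $\dim\hat\g_0=4$, the group $GL(2,\R)$ acts transitively on it, so $J$ is unique up to conjugation. When $\dim\hat\g_0=2$, i.e.\ $\hat\g_0=\mathfrak{co}(2)$, the group $\exp\hat\g_0$ stabilizes $J$. Its orbits are then points, but we must also use the discrete outer automorphisms and the scaling by $Z$. We expect a one-parameter family to remain, by the same mechanism as Proposition \ref{P3}: the deformation of $J$ transverse to the $\hat\g_0$-orbit is cut down by one dimension through normalization and the $\pm J$ identification, leaving one modulus.
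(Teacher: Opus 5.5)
Everything up to the description of $\m$ is sound and follows the paper's route. The bound forces $\g_0(x)=\mathfrak{gl}(1,\C)$ at every point. The Lemma on subalgebras containing $\mathfrak{co}(2)$ leaves $\hat\g_0\in\{\mathfrak{co}(2),\mathfrak{gl}(2,\R)\}$. The distinguished line excludes $(2,1,1,\dots)$. The sub-free / $\mathfrak{co}(2)$-invariant central extension dichotomy is Cases 1 and 2 of Section~\ref{S6}.

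Your flatness argument is a correct supplement to what the paper leaves implicit: a constant, $G_0$-invariant curvature of positive homogeneity is annihilated by $Z$. However, reorder it. First deduce transitivity on $M$ from $\op{gr}_x\mathfrak{s}\hookrightarrow\g(x)$ being bijective by dimension count. Conclude that the symbol is locally constant, and only then build $\mathcal F$. As written, you use $\mathcal F$, which already needs a constant symbol, to obtain the transitivity that is supposed to make the symbol constant.

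The genuine gap is your final paragraph on the moduli of $J$. Your assertion that the $\exp\hat\g_0$-orbits are points is false. Your ``cut down by one dimension'' also has no mechanism behind it. The scalar part of $\hat\g_0$, including $Z$, acts on $\g_{-1}$ as a multiple of $\op{id}$ and hence trivially on the space $GL(2,\R)/GL(1,\C)$ of complex structures. The identification $J\sim-J$ is discrete, so neither can lower a dimension.

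What actually happens when $\hat\g_0=\mathfrak{co}(2)_J$ is this. The identity component of $\op{Aut}_0(\m)$ acts through $SO(2)_J$, which fixes exactly $\pm J$. It rotates every other complex structure $J'$ along a circle in its hyperbolic-plane component. So the conjugacy classes of all complex structures on $\g_{-1}$ form a one-parameter family, as in Proposition~\ref{P3}.

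You must then also observe that for every $J'\neq\pm J$ one has $\mathfrak{co}(2)_J\cap\mathfrak{gl}(\g_{-1},J')=\R(Z)$, i.e.\ $r=1$. This contradicts the hypothesis $\dim\mathfrak{s}=m+2$. Hence, under the corollary's own hypothesis, $J$ is forced up to sign when $\dim\hat\g_0=2$. The ``one parameter'' can only refer to the ambient family of invariant complex structures on the distribution of $\exp\m$, in which the $r=2$ structure is a single distinguished member (compare $a=\infty$ in \eqref{2121} and $a=b=0$ in \eqref{2122}).

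Your proposal neither proves the one-parameter claim nor identifies this reading, so that part of the statement remains unsupported.
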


\section{Examples of coordinate models}\label{S7}

Let us now consider a few coordinate realizations; some of these models were previously obtained and studied in \cite{Sh},
with further elaboration in \cite{BES,MS}.

The two models of CR dimension 1 with depth 3, both extending the classical 3D sphere $\op{Im}(u)=|z|^2$, are
respectively $M^4\subset\C^3$ and $M^5\subset\C^4$ given by the equations based on classical distributions:
 \begin{align}
\text{``Engel'':}\qquad &\op{Im}(u)=|z|^2,\quad \op{Im}(v)=\op{Re}\bigl(z^2\bar{z}\bigr);\label{ENG}\\
\text{``Hilbert-Cartan'':}\qquad &\op{Im}(u)=|z|^2,\quad \op{Im}(v)=\op{Re}\bigl(z^2\bar{z}\bigr),\quad
 \op{Im}(w)=\op{Im}\bigl(z^2\bar{z}\bigr).\label{CAR}
 \end{align}
They are the base of other models, as stated in Theorem \ref{Th3}.
Their symmetry algebra is given by Theorem \ref{Th1} with $r=1,2$ respectively.
(All computations are supported with independent Maple verifications.)

Next, consider the following model $M^6\subset\C^5$ of reduced growth $(2,1,2,1)$:
 \begin{equation}\label{2121}
\op{Im}(u)=|z|^2,\quad \op{Im}(v)=\op{Re}\bigl(z^2\bar{z}\bigr),\quad \op{Im}(w)=\op{Im}\bigl(z^2\bar{z}\bigr),\quad 
\op{Im}(s)=\op{Re}\bigl(z^3\bar{z}\bigr)+a|z|^4. 
 \end{equation}
The distribution has type elliptic $\mathfrak{ell}_6$ for $|a|>\tfrac32$, parabolic $\mathfrak{p}_6$ for $|a|=\tfrac32$
and hyperbolic $\mathfrak{h}_6$ for $|a|<\tfrac32$ (including $\infty$ with renormalization).
Symmetry of \eqref{2121} is given by Theorem \ref{Th1} with $r=1$; the only exception arises for
one parameter value in the elliptic case. Indeed, there exists a unique (up to automorphism)
complex structure $J$ on the distribution $\mathfrak{p}_6$, while a one-parametric family of such for both $\mathfrak{h}_6$  
and $\mathfrak{ell}_6$. In the latter case only one choice of $\pm J$ is invariant wrt $\g_0$ and this gives the special
value of parameter $a=\infty$, the case missed in \cite{Sh} (by renormalization, the last equation of \eqref{2121} takes the form
$\op{Im}(s)=|z|^4$) so only this case gives $r=2$. 
The symmetries of \eqref{2121} are explicitly given by\footnote{The real symmetries are given by $\op{Re}(S_k)$ indeed, but we indicate the holomorphic part for clarity.} 
 \begin{align*}
S_0 =\ & z\p_z+2u\p_u+3v\p_v+3w\p_w+4s\p_s,\\
S'_1 =\ & \p_z +2iz\p_u +\bigl(2u+iz^2\bigr)\p_v +z^2\p_w +\bigl(v(4a+3)+iz^3\bigr)\p_s,\\
S''_1 =\ & i\p_z +2z\p_u +z^2\p_v + \bigl(2u-iz^2\bigr)\p_w+\bigl(w(4a-3)+z^3\bigr)\p_s,\\
S_2 =\ & \p_u,\qquad  S'_3 = \p_v,\qquad  S''_3 = \p_w,\qquad  S_4 = \p_s,
 \end{align*}
together with special symmetry $S_{0J}=iz\p_z+v\p_w-w\p_v$ valid only for $a=\infty$
(and renormalization of generators $S_1'$, $S_1''$ so that the last terms are $4v\p_s$ and $4w\p_s$, respectively).

\smallskip

Next, consider the following model $M^7\subset\C^6$  of reduced growth $(2,1,2,2)$:
 \begin{equation}\label{2122}
 \begin{split}
& \op{Im}(u)=|z|^2,\quad \op{Im}(v)=\op{Re}(z^2\bar{z}),\quad \op{Im}(w)=\op{Im}(z^2\bar{z}),\\ 
& \op{Im}(s)=\op{Re}(z^3\bar{z})+a|z|^4,\quad \op{Im}(t)=\op{Im}(z^3\bar{z})+b|z|^4. 
 \end{split}
 \end{equation}
The distribution has type elliptic $\mathfrak{ell}_7$ for $a^2+b^2<\tfrac94$, parabolic $\mathfrak{p}_7$ for $a^2+b^2=\tfrac94$
and hyperbolic $\mathfrak{h}_7$ for $a^2+b^2>\tfrac94$ in the notation of \cite[\S5.3]{AK}.
Symmetry of \eqref{2122} is given by Theorem \ref{Th1} with $r=1$, however there is again one exception for the 
special value of parameter (though we use two parameters $a,b$ for symmetry, the complex structure $J$ on $\g_{-1}$
depends on one real parameter) namely $a=b=0$. 

The symmetries of \eqref{2122} are explicitly given by
 \begin{align*}
S_0 =\ & z\p_z+2u\p_u+3v\p_v+3w\p_w+4s\p_s+4t\p_t,\\
S'_1 =\ & \p_z +2iz\p_u +\bigl(2u+iz^2\bigr)\p_v +z^2\p_w +\bigl(v(4a+3)+iz^3\bigr)\p_s
+\bigl(4bv+3w+z^3\bigr)\p_t,\\
S''_1 =\ & i\p_z +2z\p_u +z^2\p_v + \bigl(2u-iz^2\bigr)\p_w+\bigl(w(4a-3)+z^3\bigr)\p_s
+\bigl(4bw+3v-iz^3\bigr)\p_t,\\
S_2 =\ & \p_u,\qquad  S'_3 = \p_v,\qquad  S''_3 = \p_w,\qquad  S'_4 = \p_s,\qquad  S''_4 = \p_t,
 \end{align*}
together with special symmetry $S_{0J}=iz\p_z+v\p_w-w\p_v+2s\p_t-2t\p_s$ valid only for $a=b=0$
(and evaluation of generators $S_1'$, $S_1''$ on that).
Note that we can also consider limiting case $(a,b)\to\infty$ and get another $\mathfrak{h}_7$ case,
with obvious modifications for the equations and symmetries.

Next consider $M^8\subset\C^7$ with reduced growth $(2,1,2,3)$:
 \begin{equation}\label{2123}
 \begin{split}
& \op{Im}(u)=|z|^2,\quad\ \op{Im}(v)=\op{Re}(z^2\bar{z}),\quad\ \op{Im}(w)=\op{Im}(z^2\bar{z}),\\ 
& \op{Im}(s)=\op{Re}(z^3\bar{z}),\quad\ \op{Im}(t)=\op{Im}(z^3\bar{z}),\quad\ \op{Im}(q)=|z|^4. 
 \end{split}
 \end{equation}
This is the free GNLA of depth 4 hence $\hat{\g}_0=\op{pr}_0(\m)=\mathfrak{gl}(2,\R)$ but $\g_0=\mathfrak{gl}(1,\C)$, 
so here we have $r=2$ again. The symmetry of \eqref{2123} is given by Theorem \ref{Th1} with $r=2$. Explicitly, we have:
 \begin{align*}
S'_0 =\ & z\p_z+2u\p_u+3v\p_v+3w\p_w+4s\p_s+4t\p_t+4q\p_q,\\
S''_0 =\ & iz\p_z-w\p_v+v\p_w-2t\p_s+2s\p_t,\\
S'_1 =\ & \p_z +2iz\p_u +\bigl(2u+iz^2\bigr)\p_v +z^2\p_w +\bigl(3v+iz^3\bigr)\p_s
+\bigl(3w+z^3\bigr)\p_t+4v\p_q,\\
S''_1 =\ & i\p_z +2z\p_u +z^2\p_v + \bigl(2u-iz^2\bigr)\p_w+\bigl(-3w+z^3\bigr)\p_s
+\bigl(3v-iz^3\bigr)\p_t+4w\p_q,\\
S_2 =\ & \p_u,\qquad  S'_3 = \p_v,\qquad  S''_3 = \p_w,\qquad  S'_4 = \p_s,\qquad  S''_4 = \p_t,\qquad  S'''_4 = \p_q.
 \end{align*}

Finally, let us realize the model $\mathfrak{ell}_8$ given by \eqref{ell8} by another $M^8\subset\C^7$ that is not rigid:
 \begin{equation}\label{21212}
 \begin{split}
& \op{Im}(u)=|z|^2,\quad\ \op{Im}(v)=\op{Re}(z^2\bar{z}),\quad\ \op{Im}(w)=\op{Im}(z^2\bar{z}),\\ 
& \op{Im}(q)=|z|^4,\quad\ \op{Im}(s)=\op{Re}(z^2\bar{z})\op{Re}(u),\quad\ \op{Im}(t)=\op{Im}(z^2\bar{z})\op{Re}(u). 
 \end{split}
 \end{equation}
The symmetry of \eqref{21212} is given by Theorem \ref{Th1} with $r=2$. Explicitly, we have:
 \begin{align*}
S'_0 =\ & z\p_z+2u\p_u+3v\p_v+3w\p_w+4q\p_q+5s\p_s+5t\p_t,\\
S''_0 =\ & iz\p_z-w\p_v+v\p_w-t\p_s+s\p_t,\\
S'_1 =\ & \p_z +2iz\p_u +\bigl(2u+iz^2\bigr)\p_v +z^2\p_w +4v\p_q
+\bigl(u^2+iz^2u\bigr)\p_s+\bigl(z^2u-q\bigr)\p_t,\\
S''_1 =\ & i\p_z +2z\p_u +z^2\p_v + \bigl(2u-iz^2\bigr)\p_w+4w\p_q
+\bigl(z^2u+q\bigr)\p_s+\bigl(u^2-iz^2u\bigr)\p_t,\\
S_2 =\ & \p_u+v\p_s+w\p_t,\quad\  S'_3 = \p_v,\quad\  S''_3 = \p_w,\quad\  
S_4 = \p_q,\quad\  S'_5 = \p_s,\quad\  S''_5 = \p_t.
 \end{align*}


 \begin{rk}\label{Rk2}
Let us note that model \eqref{ENG} as well as the general Goursat model \eqref{GOU} is tube,
while models \eqref{CAR}-\eqref{2123} are non-tubular but rigid. Indeed, the maximal dimension of the
Abelian symmetry algebra in those cases is $m-2$ where $m=\dim M$. This already fails for models of
larger depth, for instance model \eqref{ell8} has Abelian symmetry algebra of maximal dimension 5, hence is not rigid.

However, CR models supported on distributions (4) of \cite{AK} are rigid (though not tubes). 
These models correspond to Monge equations\footnote{Monge equations are underdetermined systems of ODEs, possibly of
different orders.} ($n\ge m$)
 $$
y^{(m)}(x)=\bigl(z^{(n)}(x)\bigr)^2 
 $$
encoded as a rank 2 distribution $D$ on $M^{m+n+2}$. The symmetry algebra possesses an Abelian subalgebra of 
dimension $m+n$, given through GNLA $\m$ as $\oplus_{i>1}\g_{-i}$, see \cite[Proposition 7]{AK}.
For these distributions $\g_0=\op{pr}(\m)$ is either Borel or Cartan, hence they support 1-parametric family of
left-invariant $J$ on $D=\g_{-1}$ and the symmetry of the corresponding CR structure has $r=1$ in terms of Theorem \ref{Th1}.
These CR structures can be encoded similar to considered models \eqref{2121}-\eqref{2123}, which should be
of degree equal to depth, i.e.\ $n+1+\delta_{m,n}$. 
 \end{rk}

Following this remark for $n=2$, $m=1$ we get the Hilbert-Cartan equation $y'(x)=z''(x)^2$ whose equation-manifold  
$\R^5(x,y_0,z_0,z_1,z_2)\subset J^{1,2}(\R,\R^{1+1})$ in the mixed jet-space carries the (2,3,5) distribution. 
It underlines CR structure \eqref{CAR} and corresponds to the generalized flag variety $G_2/P_1$ (as a local chart).

\medskip

As the very last example, let us consider the full flag variety of the exceptional Lie group $G_2$, namely $G_2/P_{12}$,
where the maxmial parabolic $P_{12}=B$ is the Borel subgroup. 
The corresponding Monge equation (a local chart in the above complete flag variety) is given by partial prolongation:
 $$
y'(x)=z''(x)^2,\quad y''(x)=2z''(x)z'''(x). 
 $$
Note that the equation-manifold $\R^6(x,y_0,z_0,z_1,z_2,z_3)\subset J^{2,3}(\R,\R^{1+1})$ in the mixed jet-space carries 
the distribution $D$ spanned by $\mathcal{D}_x=\p_x+z_2^2\p_{y_0}+z_1\p_{z_0}+z_2\p_{z_1}+z_3\p_{z_2}$ and $\p_{z_3}$.
There is a one-parameter group of invariant complex structures on this rank 2 distribution
that (after recalings) is given by $J(\mathcal{D}_x)=\p_{z_3}+t\,\mathcal{D}_x$ for a parameter $t\in\R$.
Its symmetry is governed by Theorem \ref{Th1} with $r=1$, so it is 7-dimensional.

Realization of this CR structure as a real submanifold $M^6\subset\C^5$ is not that straightforward, as central extensions 
\eqref{2121} and \eqref{2122} of \eqref{CAR}, since this is the prolongation of the underying distribution only. 
This 6-dimensional model is not a tube, characterized by the existence of a 5-dimensional Abelian subalgebra of symmetries, 
neither it is rigid, which would posses a 4-dimensional Abelian subalgebra of symmetries, 
yet it has a 3-dimensional Abelian subalgebra of symmetries, and those can be simultaneously straightened.
A computation based on this idea yields the following model depending on a real parameter $\epsilon$: 
 \begin{equation}\label{G2B}
 \begin{split}
& \op{Im}(s) =|z|^2,\quad
\op{Im}(u) =\op{Re}\bigl(zs+z^2\bar{z}\bigr),\quad
\op{Im}(v) =\op{Re}\bigl(z^2s+z^3\bar{z}\bigr)+\tfrac34|z|^4,\\
& \quad\ \op{Im}(w) =\op{Re}\bigl(z^3s+24z^2\bar{z}s-2(\epsilon+i) z^4\bar{z}-4(\epsilon+5i)z^3\bar{z}^2\bigr).
 \end{split}
 \end{equation}
The symmetries are again governed by Theorem \ref{Th1} with $r=1$ and here they are in explicit form:
 \begin{align*}
S_0 =\ & z\p_z+2s\p_s+3u\p_u+4v\p_v+5w\p_w,\\
S'_1 =\ & \p_z +2iz\p_s +\bigl((2+i)s-(2-i)z^2\bigr)\p_u +\bigl(6u-4izs-(2-i)z^3\bigr)\p_v\\
 &+(24s^2 -16\epsilon v  +(27+16\epsilon)iz^2s   -2i\epsilon z^4)\p_w,\\
 S''_1 =\ & i\p_z +2z\p_s +\bigl((1+2i)z^2-s\bigr)\p_u + \bigl((1+2i)z^2 - 2s\bigr)z\p_v\\
 &+\bigl(16v +(21-16i)z^2s -2\epsilon z^4\bigr)\p_w,\\
 S_2 =\ & \p_s +iz\p_u +iz^2\p_v +\bigl(24u -24izs +iz^3\bigr)\p_w,\\
 S_3 =\ & \p_u,\qquad  S_4 = \p_v,\qquad  S_5 = \p_w.
 \end{align*}

The structure of this symmerty algebra is $\mathfrak{s}=\R\ltimes\mathfrak{n}$, here $\mathfrak{n}$ is the nilpotent approximation
of the distribution $D$, which is also the nilradical of the Borel subalgebra $\mathfrak{b}\subset\op{Lie}(G_2)$.
This 7-dimensional solvable Lie algebra $\mathfrak{s}$ differs from the 7-dimensional solvable symmetry algebra of \eqref{CAR}
because one case is obtained from the other by prolongation of the distribution but not of the CR structure.
Both algebras are 2-dimensional extensions of the 5-dimensional GNLA $\m_{HC}$ of growth $(2,1,2)$, namely by a
2-dimensional subalgebra of $\op{pr}_0(\m)=\mathfrak{gl}(2,\R)$. 
While for \eqref{CAR} this is an extension by $\mathfrak{gl}(1,\C)\subset\mathfrak{gl}(2,\R)$,
in case of \eqref{G2B} it is an extension by $\mathfrak{sol}(2)\subset\mathfrak{gl}(2,\R)$ with the following
re-grading of the Lie algebra.

This gives a coordinate realization of the first non-Goursat homogeneous CR model from Theorem \ref{Th2}.

\smallskip

All these examples demonstrate how the models behave under integrable extension and under 
prolongation of the underlying distribution: while in the former case the formulae and the symmetries are inheritant,
the latter case is rather involved. And indeed, the nonrigid models of Theorem \ref{Th2} are inheritant from the Goursat 
distribution, but not with respect to each other as is apparent from the proof of Proposition \ref{P2}.

\medskip

{\bf Acknowledgment.} 
I would like to thank Valery Beloshapka, Jan Gregorovi\v{c} and David Sykes for useful discussions, 
which happened during the conference on $k$-nondegenerate CR structures in SUSTech University, Shenzhen, China. 
I am grateful to the main organizer Ilya Kossovsky for hospitality and inspiring discussion sessions. 
This research was supported by the UiT Aurora project MASCOT.


\end{document}